\documentclass[11pt]{amsart}
\usepackage{import}
\usepackage{mathabx}
\usepackage{bbm}
\usepackage[T1]{fontenc}
\usepackage{amsfonts}
\usepackage{mathrsfs}
\usepackage{stmaryrd}
\usepackage{upgreek}
\usepackage{textcomp}
\usepackage[mathscr]{eucal}

\usepackage{extarrow}
\usepackage{enumerate}
\usepackage{enumitem}
\usepackage{graphicx}
\usepackage[colorlinks=true, linkcolor=blue]{hyperref}
\usepackage{mathtools}
\usepackage{mathtext}
\usepackage{amsmath}
\usepackage{amssymb}
\usepackage{amsthm}
\usepackage{tikz}
\usepackage[all,cmtip]{xy}
\usepackage{comment}

\usepackage[a4paper,margin=1.2in]{geometry}

\usetikzlibrary{arrows}
\usetikzlibrary{matrix}
\usetikzlibrary{shapes}
\usetikzlibrary{snakes}
\usetikzlibrary{matrix}

\DeclareMathOperator{\et}{\textup{et}}

\global\long\def\set{(\textup{Sets})}

\newcommand{\Aut}{{\rm Aut}}

\newcommand{\Cov}{\textup{Cov}}

\renewcommand{\et}{\textup{\'et}}

\newcommand{\Noohi}{\textup{Noohi}}

\newcommand{\pet}{\textup{proét}}

\newcommand{\red}{{\rm red}}
\newcommand{\Sets}{\textup{Sets}}

\usepackage{bbm}
\usepackage{biblatex}
\usepackage{braket}
\theoremstyle{plain}
\newtheorem{thm}{Theorem}[section]
\newtheorem*{thm*}{Theorem}
\newtheorem{lem}[thm]{Lemma}
\newtheorem{cor}[thm]{Corollary}
\newtheorem{prop}[thm]{Proposition}

\theoremstyle{remark}
\newtheorem{rmk}[thm]{Remark}
\newtheorem{ex}[thm]{Example}

\theoremstyle{definition}
\newtheorem{defn}[thm]{Definition}

\newtheorem{sett}[thm]{Setting}

\theoremstyle{plain}
\newtheorem{thmI}{Theorem}

\newtheorem{thmII}{Theorem}

\newcommand{\triarrows}{\mathrel{\,\begin{tikzpicture}[baseline=-0.5ex, line width=0.4pt]
  \draw[->] (0,0.15) -- (1,0.15);
  \draw[->] (0,0) -- (1,0);
  \draw[->] (0,-0.15) -- (1,-0.15);
\end{tikzpicture}}}
\def\Cov{\operatorname{Cov}}
\def\Sets{\operatorname{Sets}}

\begin{document}

\title{The pro-\'etale fundamental group of singular schemes}

\author{Jiu-Kang Yu \and Lei Zhang}

\address{Jiu-Kang YU\\ Hetao Institute of Mathematics and Interdisciplinary
Sciences\\ Shenzhen, Guangdong Province\\ China}
\email{jiukangyu@himis-sz.cn}

\address{Lei ZHANG\\ Sun Yat-Sen University\\ School of Mathematics (Zhuhai)\\ Zhuhai, Guangdong Province\\ China}
\email{cumt559@gmail.com} 

\address{Marcin LARA\\ Instytut Matematyczny PAN\\ Śniadeckich 8\\ Warsaw, Poland}
\email{marcin.lara@impan.pl}

\date{\today}

\maketitle

\vspace{-1.3em}
\begin{center}
    \normalfont\textit{With an appendix by Marcin Lara}
\end{center}

\begin{abstract}
  We compute the pro-\'etale fundamental group of a connected Nagata
  J-2 scheme in terms of the \'etale fundamental groups of the
  normalizations of its irreducible components and a discrete free
  group.  The result generalizes a formula of E.~Lavanda for
  semi-stable curves and relies on a combination of proper descent
  techniques for \'etale morphisms and a combinatorial van Kampen
  construction for Noohi groups.  As a by‑product we characterize when
  a continuous representation of the pro‑\'etale fundamental group
  factors through a discrete quotient.
\end{abstract}

\section*{Introduction}

For a connected locally topologically Noetherian scheme \(X\) the
\textit{pro-\'etale fundamental group} \(\pi_1^\pet(X)\) was introduced
by Bhatt and Scholze in \cite{BS15}.  It refines Grothendieck’s \'etale
fundamental group \(\pi_1^\et(X)\) and classifies geometric covers of
\(X\).  While the two groups coincide for normal schemes, singular
schemes carry additional pro‑\'etale information: for a degenerate
curve over an algebraically closed field, \(\pi_1^\pet(X)\) is a
discrete free group, a phenomenon that appeared implicitly in the
theory of Mumford curves \cite{Mumford72}.

In \cite[Theorem~1.17]{Elena18} E.~Lavanda computed \(\pi_1^\pet(X)\)
for a semi‑stable curve over an algebraically closed field.  She
showed that the group is the Noohi coproduct of the \'etale fundamental
groups of the normalizations of its irreducible components and a free
group of an explicitly determined rank.  Our first main result extends
this formula to a much larger class of singular schemes.

\begin{thmI}\label{thmI}
  Let \(X\) be a connected Nagata J‑2 scheme, let \(\tilde X_i\) be the
  normalizations of its irreducible components, and let \(Z\) be the
  singular locus of $X_\red$.  Then \(\pi_1^\pet(X)\) can be expressed as an
  iterated Noohi coproduct and Noohi quotient from the \'etale
  fundamental groups \(\pi_1^\et(\tilde X_i)\), the pro‑\'etale
  fundamental groups of the connected components of \(Z\), and a
  discrete free group whose rank depends only on the combinatorics of
  the singularities.
\end{thmI}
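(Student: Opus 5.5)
We reduce to a gluing situation and apply a van Kampen theorem for geometric covers along a proper surjection. First, since geometric covers (and hence \(\pi_1^\pet\)) are insensitive to nilpotents, we replace \(X\) by \(X_\red\). Let \(\nu\colon \tilde X=\coprod_i \tilde X_i\to X\) be the normalization. The Nagata hypothesis makes \(\nu\) finite, and J-2 makes the singular locus \(Z\subset X\) closed. Put \(E=\nu^{-1}(Z)\) with its reduced structure. Then \(\nu\) is an isomorphism over \(X\setminus Z\), so \(X\) is the pushout (Milnor square) of \(\tilde X\leftarrow E\to Z\). The morphism \(\tilde X\sqcup Z\to X\) is finite and surjective, hence proper and surjective, so it is a morphism of effective descent for étale (indeed for geometric) covers. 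This is the proper descent technique mentioned in the abstract.

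The key step is to identify the category \(\Cov_X\) of geometric covers of \(X\) with a category of descent data. Such a datum consists of
\begin{itemize}
\item a geometric cover \(Y_1\to\tilde X\),
\item a geometric cover \(Y_2\to Z\), and
\item an isomorphism \(Y_1|_E\cong Y_2\times_Z E\) satisfying the cocycle condition on \(E\times_Z E\).
\end{itemize}
Because \(E\to Z\) and \(\tilde X\to X\) are finite, the relevant fibre products are again Nagata schemes with finitely many connected components. I would first check that the descended object is Hausdorff-étale and satisfies the valuative criterion, so that it is a geometric cover and not merely an étale algebraic space. This reduces to the finite-morphism case of proper descent for étale morphisms, applied to each quasi-compact clopen piece.

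Next I translate the descent category into groups. Each \(\tilde X_i\) is normal, so \(\pi_1^\pet(\tilde X_i)=\pi_1^\et(\tilde X_i)\) is profinite. The connected components \(Z_j\) of \(Z\) contribute \(\pi_1^\pet(Z_j)\), and the components \(E_k\) of \(E\) contribute \(\pi_1^\pet(E_k)\). Form the bipartite graph \(\Gamma\) as follows:
\begin{itemize}
\item its vertices are the \(\tilde X_i\) and the \(Z_j\);
\item its edges are the \(E_k\), each joining the \(\tilde X_i\) and the \(Z_j\) that it maps to.
\end{itemize}
Connectedness of \(X\) gives connectedness of \(\Gamma\). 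Choose a maximal tree \(T\) together with base points and étale paths along the tree edges. The descent category is then equivalent to the category of discrete sets carrying compatible continuous actions of all vertex groups, with edge-group identifications. By the combinatorial van Kampen construction for Noohi groups, this category is the category of \(G\)-sets. Here \(G\) is the Noohi quotient of the Noohi coproduct
\[
\Bigl(\mathop{\ast}_i \pi_1^\et(\tilde X_i)\Bigr) \ast \Bigl(\mathop{\ast}_j \pi_1^\pet(Z_j)\Bigr) \ast F
\]
by the relations identifying the images of each \(\pi_1^\pet(E_k)\). The group \(F\) is free and discrete on the edges not in \(T\), so its rank is \(1-\chi(\Gamma)\), which depends only on the combinatorics. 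Since \(\pi_1^\pet(X)\) is the Noohi group of the infinite Galois category \(\Cov_X\), this gives the claimed expression.

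The main obstacle is the descent step: showing that the glued objects are geometric covers, and that the equivalence respects the topology so that the Noohi (not merely abstract) coproduct and quotient appear. I would handle this in two parts. First, I would show that quasi-compact pieces descend, by finite descent for étale morphisms. Second, I would verify the valuative criterion using surjectivity of \(\tilde X\sqcup Z\to X\). The topological comparison then follows formally, because both sides are characterized as the Noohi groups of equivalent tame infinite Galois categories.
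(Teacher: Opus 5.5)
Your outline is correct, and its geometric input is the paper's: the equivalence $\Cov(X)\simeq\Cov(\tilde X)\times_{\Cov(\tilde Z)}\Cov(Z)$ of Lemma~\ref{closed van kampen v2}. Where you differ is in turning this into a group.

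The paper proceeds in two stages. It first treats connected $Z$ with the two-vertex machine of Proposition~\ref{van kampen machine} (Theorem~\ref{thm:main-structure}). It then handles disconnected $Z$ by the open d\'evissage $X=T_1\cup T_1'$ of Lemma~\ref{lem:devissage} and Theorem~\ref{thm:disconnected-structure}, followed by induction on the number of components of $Z$.

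You instead split $\tilde X$, $Z$, $E$ into components, read the 2-fibre product as a graph of categories on the bipartite graph $\Gamma$, and collapse along a maximal tree. This needs Proposition~\ref{van kampen machine} for an arbitrary finite connected graph rather than two vertices. The proof is the same: identify fibres along tree edges, and let the non-tree edges give free generators subject to twisted equivariance. In return you get a single non-inductive presentation built from exactly the ingredients named in the statement, with free rank $1-\chi(\Gamma)=\tilde m-m-n+1$. Theorem~\ref{thm:curve-case} is then immediate.

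The paper's intermediate groups $\pi_1^\pet(T_1)$ and $\pi_1^\pet(T_1')$ are instead built from proper open subschemes of the $\tilde X_i$ (such as $\tilde X_i\setminus\nu^{-1}(\bigcup_{t\neq 1}Z_t)$), with relations from the $\pi_1^\et(D_t)$. Recovering the $\pi_1^\et(\tilde X_i)$ themselves therefore costs a further open van Kampen step.

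Two justifications in your sketch need repair.

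First, $X$ is in general not the scheme-theoretic pushout of $\tilde X\leftarrow E\to Z$. For the cusp $\operatorname{Spec}k[t^2,t^3]$ with $E$ reduced, that pushout is $\operatorname{Spec}(k[t]\times_k k)=\mathbb{A}^1_k$. Your description of the descent data along $\tilde X\sqcup Z\to X$ is nevertheless correct, even though it silently drops the component over $\tilde X\times_X\tilde X$. The right justification runs as follows. Since $\nu$ is an isomorphism over $X\setminus Z$, we have $\tilde X\times_X\tilde X=\Delta(\tilde X)\cup(E\times_Z E)$ set-theoretically. Descent along closed covers (Example~\ref{closed van kampen}) on the double and triple products then forces that component, exactly as in the proof of Lemma~\ref{closed van kampen v2}. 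The result is the plain 2-fibre product, with no extra cocycle condition on $E\times_Z E$. For the nodal curve, the two gluing bijections over the node are independent, and that independence is what produces $\pi_1^\pet=\mathbb{Z}$.

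Second, effectivity cannot be checked on quasi-compact clopen pieces, because connected geometric covers need not be quasi-compact. The $\mathbb{Z}$-cover of the nodal curve, an infinite chain of $\mathbb{P}^1$'s, is an example. You do not need to prove effectivity yourself: effective proper descent for $\Cov(-)$ is \cite[Proposition~1.16]{Elena18}, resting on \cite{Rydh10}. The paper simply cites it, and you already invoke it. So the real content of your descent step is the simplification of the descent data, not effectivity.
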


The precise description is given in Theorems~\ref{thm:main-structure}
and~\ref{thm:disconnected-structure}.  When the singular locus is
particularly simple, the formula becomes completely explicit (cf.~Theorem
\ref{thm:curve-case}):

\begin{thmII}\label{thmII}
  Suppose that the singular locus \(Z\) of the reduced scheme
  \(X_{\mathrm{red}}\) is a disjoint union of spectra of separably
  closed fields (for instance, when \(X\) is a curve over a separably
  closed field).  Let \(n\) be the number of irreducible components of
  \(X\), let \(\tilde X_i\) be their normalizations, and let \(m\)
  (resp. \(\tilde m\)) be the number of connected components of \(Z\)
  (resp. of \(Z\times_X\tilde X\)).  Then there is an isomorphism of
  Noohi groups
  \[
    \pi_1^\pet(X,x)\;\simeq\;
    \Bigl(\coprod_{i=1}^n \pi_1^\et(\tilde X_i,x_i)\Bigr)
    \;\coprod\; \mathbb{F}_{\tilde m - m - n + 1},
  \]
  where \(\mathbb{F}_r\) denotes the free discrete group of rank \(r\)
  and the coproduct is taken in the category of Noohi groups.
\end{thmII}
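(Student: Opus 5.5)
We prove Theorem~\ref{thmII} by descent for geometric covers along the normalization map, followed by a combinatorial van Kampen computation. Since \(\pi_1^\pet\) is insensitive to nilpotents, we may replace \(X\) by \(X_\red\). Let \(\nu\colon \tilde X=\coprod_i \tilde X_i\to X\) be the normalization; it is finite because \(X\) is Nagata. It is an isomorphism over \(X\setminus Z\). Set \(\tilde Z=Z\times_X\tilde X\), so that \(X\) is the pushout of the diagram \(\tilde X\leftarrow \tilde Z\rightarrow Z\).

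\textbf{Step 1 (descent).} We first show that the category \(\Cov_X\) of geometric covers of \(X\) is equivalent to the category of triples \((Y', W, \alpha)\). Here \(Y'\in\Cov_{\tilde X}\), \(W\in \Cov_Z\), and \(\alpha\colon Y'\times_{\tilde X}\tilde Z\simeq W\times_Z\tilde Z\) is an isomorphism in \(\Cov_{\tilde Z}\). The natural tool is proper (finite) descent for étale morphisms along the finite surjection \(\tilde X\sqcup Z\to X\). The cocycle condition on the double fibre product reduces to the data \(\alpha\), because the gluing happens only along \(\tilde Z\to Z\). To get effectivity for non-quasi-compact geometric covers, we glue the closed-immersion and finite pieces as a pushout of schemes. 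We use that étale maps satisfying the valuative criterion are determined locally, and we then check the valuative criterion on the glued object.

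\textbf{Step 2 (simplify the pieces).} By hypothesis \(Z\) and \(\tilde Z\) are finite disjoint unions of spectra of separably closed fields. Hence \(\Cov_Z\simeq \Sets^{m}\) and \(\Cov_{\tilde Z}\simeq\Sets^{\tilde m}\). Since each \(\tilde X_i\) is normal, every geometric cover of \(\tilde X_i\) is a disjoint union of finite étale covers. This gives \(\Cov_{\tilde X_i}\simeq \pi_1^\et(\tilde X_i)\text{-Sets}\), with \(\pi_1^\et(\tilde X_i)\) profinite. Each point \(z\in\tilde Z\) lying on \(\tilde X_i\) provides a fibre functor, and these fibre functors are isomorphic to \(F_{x_i}\) via chosen paths. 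The descent data therefore become the following: \(\pi_1^\et(\tilde X_i)\)-sets \(S_i\), sets \(T_j\) for \(j=1,\dots,m\), and a bijection \(S_{i(z)}\simeq T_{j(z)}\) for each \(z\in\tilde Z\).

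\textbf{Step 3 (van Kampen).} Form the bipartite graph \(\Gamma\) with vertices the \(n\) components \(\tilde X_i\) and the \(m\) points of \(Z\), and with an edge for each of the \(\tilde m\) points of \(\tilde Z\). Because \(X\) is connected, \(\Gamma\) is connected, so \(\pi_1(\Gamma)\) is free of rank \(\tilde m-(n+m)+1\). Choose a maximal tree and normalise the bijections along tree edges to identities; this identifies all the \(S_i\) and \(T_j\) with a single set \(S\). The remaining \(\tilde m-m-n+1\) edge bijections are arbitrary permutations of \(S\), i.e.\ actions of the discrete free group. Therefore \(\Cov_X\) is equivalent to the category of sets carrying actions of all the \(\pi_1^\et(\tilde X_i)\) together with a free discrete group of that rank. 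By the universal property of the Noohi coproduct, this is the category of \(G\)-sets for the Noohi group \(G=(\coprod_i\pi_1^\et(\tilde X_i))\coprod\mathbb F_{\tilde m-m-n+1}\).

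\textbf{Step 4 (conclusion).} Finally we check that the equivalence is compatible with the fibre functor at \(x\). Since \(\pi_1^\pet(X,x)\) and \(G\) are both Noohi groups recovered from their infinite Galois categories, this yields the isomorphism. We expect Step 1 to be the main obstacle: proving effective descent of geometric covers, which are not quasi-compact, along the finite map \(\tilde X\sqcup Z\to X\), and ensuring the glued étale scheme satisfies the valuative criterion. The plan is to deduce this from finite-type descent applied to each quasi-compact open piece of the cover, together with the fact that \(\nu\) is a universal homeomorphism away from \(Z\).
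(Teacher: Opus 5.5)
Your proof is correct in outline, but after the first step it organizes the computation differently from the paper.

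\textbf{Where you agree with the paper.} Your Step~1 is exactly Lemma~\ref{closed van kampen v2}. You derive it from descent along $\tilde X\sqcup Z\to X$; the paper uses the \v{C}ech nerves of $\tilde X\to X$ and $\tilde Z\to Z$. In both versions the key input is that $\tilde X\times_X\tilde X=\Delta(\tilde X)\cup(\tilde Z\times_Z\tilde Z)$ set-theoretically, which holds because $\tilde X\to X$ is an isomorphism over $X\setminus Z$. This is what forces the descent datum on $\tilde X\times_X\tilde X$ to be determined by $\alpha$.

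\textbf{The paper's route.} Proposition~\ref{van kampen machine} needs $\Cov(Z)$ to be a connected Galois category, so the paper only uses this equivalence when $Z$ is connected. For $m=1$ it applies Theorem~\ref{thm:main-structure}, whose $\mathbf{VK}$-groups collapse to $\pi_1^\et(\tilde X_i)\coprod\mathbb{F}_{n_i-1}$. It then inducts on $m$ via the open d\'evissage $X=T_1\cup T_1'$ (Lemma~\ref{lem:devissage}, Theorem~\ref{thm:disconnected-structure}), tracking ranks through $\tilde m_1+\tilde m_2=\tilde m$ and $\#S_1+\#S_2=n+d$.

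\textbf{Your route.} You apply the closed descent once to the whole singular locus; nothing in Lemma~\ref{closed van kampen v2} needs $Z$ connected. With $\Cov(Z)\simeq\Sets^m$ and $\Cov(\tilde Z)\simeq\Sets^{\tilde m}$, the fibre product becomes the category of representations of a graph of groups with trivial edge groups on the bipartite dual graph. Trivializing along a spanning tree, $\tilde m-m-n+1$ appears directly as the first Betti number of $\Gamma$.

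\textbf{What each route buys.} Yours is shorter and needs no induction. It also avoids a bookkeeping point in the d\'evissage. The components of $T_1$ and $T_1'$ have as normalizations $\tilde X_i$ minus the preimages of the other $Z_t$. So the induction hypothesis really yields \'etale fundamental groups of these punctured schemes, which must then be reassembled into $\pi_1^\et(\tilde X_i)$ over the $D_j$. The paper's route, in exchange, is built for positive-dimensional singular loci. There $\Cov(Z)$ is not a power of $\Sets$, and a spanning-tree argument alone does not suffice.

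\textbf{Minor corrections.}
\begin{enumerate}
\item You do not need to reprove the effectivity you flag as the main obstacle. It is the proper descent for geometric covers of \cite[Proposition~1.16]{Elena18}, resting on Rydh's work, which the paper uses as a black box.
\item Your own sketch for that effectivity would need more than you state. A quasi-compact open of $Y'\sqcup W$ need not be stable under the descent datum. Saturating it need not give an open set, because finite maps are not open.
\item $X$ need not be the scheme-theoretic pushout $\tilde X\sqcup_{\tilde Z}Z$. For a tacnode, that pushout is two transversally crossing lines. This is another reason to rely on the cited descent result rather than on gluing pushouts.
\item $\tilde Z$ need not be reduced, as the cusp shows. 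Only $\tilde Z_\red$ is a finite disjoint union of spectra of separably closed fields (finite, hence purely inseparable, extensions of the residue fields of $Z$). This suffices, since $\Cov$ is insensitive to nilpotents.
\end{enumerate}
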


Theorem~\ref{thmII} recovers Lavanda’s formula for semi‑stable curves as a
special case and generalises it to arbitrary curves as well as to
higher‑dimensional schemes whose singularities are isolated.

Our method relies on two geometric tools:
(1) the proper descent theorem for geometric covers due to
    Grothendieck and Rydh (Lemma~\ref{closed van kampen v2}), which
    allows one to glue geometric covers along closed subschemes;
    (2) a van Kampen machine for Noohi groups (Proposition~\ref{van kampen
    machine})
    that translates the geometric gluing into an algebraic presentation
    of the fundamental group.
This approach does not depend on the
characteristic, and reduces the computation of \(\pi_1^\pet(X)\) to an
induction on the number of connected components and the dimension of
the singular locus.  The result is a description of the pro‑\'etale
fundamental group entirely in terms of the much better understood
\'etale fundamental groups of normal schemes and discrete free groups.

%As a further application of the structure theorems we characterise
%when a continuous representation of \(\pi_1^\pet(X)\) factors through a
%discrete quotient (Proposition~\ref{prop:discrete-quotient}).

\subsection*{Notations and Conventions}

\begin{enumerate}
 % \item For a topological group \(\pi\) and a field \(K\),
 %   \(\Rep_K^\cts(\pi)\) denotes the category of finite dimensional
 %   continuous \(K\)‑representations, where \(K\) is viewed as a
 %   discrete field.
  \item For a scheme \(X\), \(\Cov(X)\) denotes the category of
    \textit{geometric covers} of \(X\) in the sense of
    \cite[Definition~7.3.1]{BS15}.  If \(X\) is locally topologically
    Noetherian and connected, \(\pi_1^\pet(X,x)\) is the corresponding
    Noohi group.
  \item All schemes are assumed to be locally topologically
    Noetherian.  A \textit{Nagata} scheme is a Noetherian scheme in
    which every integral closed subscheme has a finite integral
    closure \cite[\href{https://stacks.math.columbia.edu/tag/033Y}{033Y}]{stacks-project}.
    A scheme is \textit{J‑2} if its singular locus is closed
    \cite[\href{https://stacks.math.columbia.edu/tag/07R2}{07R2}]{stacks-project}.
\end{enumerate}

\section{Noohi groups}

\subsection{Results about Noohi groups}
Let \(G\) be a topological group (not necessarily Hausdorff) and let
\(G\text{-}\Sets\) be the category of discrete sets equipped with a
continuous \(G\)-action.  Denote by
\(
  F_G\colon G\text{-}\Sets\longrightarrow\Sets
\)
the forgetful functor.

\begin{defn}\label{def:noohi}
  A topological group \(G\) is called a \textit{Noohi group} if the natural map
\(G\to\Aut(F_G)\) is an isomorphism of topological groups, where
\(\Aut(F_G)\) is equipped with the compact‑open topology.
\end{defn}

\begin{lem}\label{Small facts about Noohi groups}
  A Noohi group \(\pi\) is Hausdorff and has a basis of open
  neighbourhoods of the identity consisting of subgroups.
\end{lem}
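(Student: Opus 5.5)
Both assertions will be read off from the topology of \(\Aut(F_G)\), since by Definition~\ref{def:noohi} the group \(\pi\) is isomorphic, as a topological group, to \(\Aut(F_\pi)\). So it suffices to prove both properties for \(\Aut(F)\), where \(F=F_\pi\colon \pi\text{-}\Sets\to\Sets\) and \(\Aut(F)\) carries the compact‑open topology. First I will make this topology explicit. An element of \(\Aut(F)\) is a family \((\sigma_S)_S\) of bijections \(\sigma_S\colon F(S)\to F(S)\), natural in \(S\). Each \(F(S)\) is discrete, so compact subsets of \(F(S)\) are finite, and the compact‑open topology on \(\Aut(F(S))\) is the topology of pointwise convergence. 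Hence the topology on \(\Aut(F)\), viewed as a subspace of \(\prod_S \Aut(F(S))\), has as a subbasis of neighbourhoods of the identity the sets
\[
U_{S,s}=\{\sigma\in\Aut(F)\;:\;\sigma_S(s)=s\},\qquad S\in\pi\text{-}\Sets,\ s\in F(S).
\]
Each \(U_{S,s}\) is the stabilizer of \(s\), so it is a subgroup, and a finite intersection of subgroups is again a subgroup. Translating by group elements then gives a basis of open neighbourhoods of the identity consisting of (open) subgroups, which is the second claim. The one point to check is that the compact‑open topology on \(\Aut(F)\) really is this pointwise‑stabilizer topology; I will take this as the standard reading of the convention and note that there are no compactness subtleties, because every compact subset of a discrete set is finite.

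For the Hausdorff property, it is enough to show that the intersection of all the \(U_{S,s}\) is \(\{1\}\), since a topological group is Hausdorff exactly when \(\{1\}\) is closed, i.e.\ when the neighbourhoods of the identity intersect to the identity. Suppose \(\sigma\) fixes every \(s\in F(S)\) for every \(S\). Then \(\sigma_S=\mathrm{id}\) for every \(S\), so \(\sigma\) is the identity natural transformation. This shows \(\Aut(F)\) is Hausdorff, and hence so is \(\pi\) through the isomorphism.

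I expect no real obstacle here. The only place needing care is that the isomorphism \(\pi\cong\Aut(F_\pi)\) is an isomorphism of topological groups, which is exactly what Definition~\ref{def:noohi} supplies. One should not argue directly on \(\pi\), because a general topological group need not have either property: for example, the stabilizers of points in continuous \(\pi\)-sets need not separate points of \(\pi\) unless \(\pi\) is Noohi.
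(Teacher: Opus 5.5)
Your argument is correct and is essentially the paper's: both transport along the topological isomorphism $\pi\cong\Aut(F_\pi)$ and use that $\Aut(F_\pi)$ is a topological subgroup of a product of permutation groups of discrete sets with the pointwise (compact-open) topology. The differences are cosmetic. The paper indexes the product by the set of objects $\pi/U$ with $U$ open, which avoids a class-indexed product, and it cites that both properties pass to products and subgroups. You instead unfold this directly via the stabilizers $U_{S,s}$; no translation step is needed there, since finite intersections of the $U_{S,s}$ already form the neighbourhood basis at the identity.
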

\begin{proof}
  Since \(\pi\) is Noohi, \(\pi \cong \Aut(F_\pi)\) as topological
  groups, where \(F_\pi\colon\pi\text{-}(\Sets)\to(\Sets)\) is the
  forgetful functor.  Let \(T\) be the set of open subgroups of
  \(\pi\).  For each \(U \in T\) the discrete \(\pi\)-set \(\pi/U\)
  yields a restriction map \(\Aut(F_\pi) \to \Aut(\pi/U)\).  These
  maps embed \(\Aut(F_\pi)\) as a topological subgroup of
  \(\prod_{U \in T} \Aut(\pi/U)\) (with the compact‑open topology on
  each factor).  Each \(\Aut(\pi/U)\) is Hausdorff and has a basis of
  open subgroups; these properties are inherited by arbitrary products
  and by topological subgroups.  Hence \(\pi \cong \Aut(F_\pi)\) is
  Hausdorff and admits a neighbourhood basis of open subgroups.
\end{proof}

\begin{lem}\label{faithfulness of Tannakian}
  Let \(f,g\colon G\to\pi\) be continuous homomorphisms of topological
  groups.  Assume that \(\pi\) is Hausdorff and has a basis of open
  neighbourhoods of the identity consisting of subgroups.  If the
  induced functors \(f^*,g^*\colon\pi\text{-}(\Sets)\to G\text{-}(\Sets)\)
  coincide, then \(f=g\).
\end{lem}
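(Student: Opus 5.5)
The plan is to test \(f^*=g^*\) on the coset sets \(\pi/U\), with \(U\) an open subgroup of \(\pi\). First I will check that each \(\pi/U\) is a discrete \(\pi\)-set: the stabilizer of every point \(hU\) is the open subgroup \(hUh^{-1}\), so the action is continuous. Its pullbacks \(f^*(\pi/U)\) and \(g^*(\pi/U)\) have the same underlying set \(\pi/U\), with \(G\) acting through \(f\) and through \(g\) respectively. The hypothesis that \(f^*=g^*\) as functors says in particular that these are the same \(G\)-set. So for every \(\sigma\in G\) and every coset \(hU\) we get
\[
  f(\sigma)hU = g(\sigma)hU .
\]

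Next I take \(h=1\). This gives \(f(\sigma)U=g(\sigma)U\), that is, \(g(\sigma)^{-1}f(\sigma)\in U\). This holds for every open subgroup \(U\) of \(\pi\).

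Finally I use the two hypotheses on \(\pi\). The open subgroups form a neighbourhood basis of the identity, so the element \(g(\sigma)^{-1}f(\sigma)\) lies in every neighbourhood of \(1\). Since \(\pi\) is Hausdorff, the intersection of all neighbourhoods of \(1\) is \(\{1\}\). Hence \(g(\sigma)^{-1}f(\sigma)=1\), and so \(f(\sigma)=g(\sigma)\) for all \(\sigma\in G\).

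No step here is a real obstacle. The only point needing care is the meaning of "the functors coincide": it must include equality of the \(G\)-actions on the underlying sets. This holds because \(f^*\) and \(g^*\) are both defined as the identity on underlying sets, with the action twisted by \(f\) or \(g\). Continuity of \(f\) and \(g\) is needed only so that \(f^*\) and \(g^*\) land in \(G\text{-}(\Sets)\).
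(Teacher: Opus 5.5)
Your proof is correct and follows the same route as the paper. Both evaluate \(f^*=g^*\) on the coset sets \(\pi/U\) for open subgroups \(U\), deduce \(f(\sigma)U=g(\sigma)U\), and conclude because the open subgroups of a Hausdorff group with a basis of open subgroups intersect in \(\{e\}\). You spell out a bit more explicitly than the paper why \(\pi/U\) is a discrete \(\pi\)-set and why that intersection is trivial.
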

\begin{proof}
  The intersection of all open subgroups of \(\pi\) is \(\{e\}\); hence
  it suffices to test equality after the quotient maps
  \(\pi\to\pi/U\) for every open subgroup \(U\).  Since
  \(\pi/U\in\pi\text{-}(\Sets)\), the coincidence of \(f^*\) and
  \(g^*\) forces these compositions to be equal, so \(f=g\).
\end{proof}

The following lemma generalises \cite[§7.2]{BS15}, \cite[Lemma~2.51]{Lara19} to
topological groups that are not necessarily Hausdorff.

\begin{lem}\label{lem:noohi-adjoint}
  The inclusion functor
  \[
    (\text{Noohi groups})\;\hookrightarrow\;(\text{topological groups})
  \]
  admits a left adjoint, which we denote by \((-)^{\Noohi}\).
\end{lem}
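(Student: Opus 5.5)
The candidate for the left adjoint is the obvious one: send a topological group \(G\) to \(G^{\Noohi} := \Aut(F_G)\) with its compact‑open topology, where \(F_G\colon G\text{-}\Sets\to\Sets\) is the forgetful functor. The unit is the canonical continuous homomorphism \(\eta_G\colon G\to\Aut(F_G)\). The plan has four steps, carried out in this order.

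First, show that \(\Aut(F_G)\) is Noohi for every \(G\), and that restriction along \(\eta_G\) gives an equivalence \(\Aut(F_G)\text{-}\Sets\simeq G\text{-}\Sets\) compatible with the fibre functors.

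Second, construct the factorization. Let \(\pi\) be Noohi and \(\phi\colon G\to\pi\) continuous. Pullback gives a functor \(\phi^*\colon\pi\text{-}\Sets\to G\text{-}\Sets\) commuting with the forgetful functors. Hence each automorphism of \(F_G\) restricts to an automorphism of \(F_\pi\). This defines a homomorphism \(\Aut(F_G)\to\Aut(F_\pi)\cong\pi\). It is continuous, because the compact‑open topologies are generated by stabilizers of finitely many points in discrete sets, and \(\phi^*\) carries such sets to such sets. Composing with \(\eta_G\) recovers \(\phi\), since \(\pi\to\Aut(F_\pi)\) is the identity under the Noohi isomorphism.

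Third, prove uniqueness of the factorization. This follows from Lemma~\ref{faithfulness of Tannakian}, which applies because \(\pi\) is Hausdorff with a basis of open subgroups by Lemma~\ref{Small facts about Noohi groups}. Two factorizations \(\psi_1,\psi_2\colon\Aut(F_G)\to\pi\) both induce the same functor on \(\pi\)‑sets, namely the one corresponding to \(\phi^*\) under the equivalence from the first step. Therefore \(\psi_1=\psi_2\). Functoriality and naturality of the bijection \(\mathrm{Hom}(\Aut(F_G),\pi)\cong\mathrm{Hom}(G,\pi)\) are then formal.

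The first step is the main obstacle. The equivalence \(G\text{-}\Sets\simeq\Aut(F_G)\text{-}\Sets\) needs two facts:
\begin{itemize}
\item every \(G\)‑set \(S\) carries a continuous \(\Aut(F_G)\)‑action, which holds by the definition of the compact‑open topology on \(\Aut(F_G)\);
\item every continuous \(\Aut(F_G)\)‑set restricts to a \(G\)‑set on which the actions agree.
\end{itemize}
The second fact is the subtle part. An open subgroup \(U\subset\Aut(F_G)\) contains a basic open subgroup fixing finitely many points \(s_1,\dots,s_k\) in sets \(S_1,\dots,S_k\). Hence \(\Aut(F_G)/U\) is a quotient of an orbit in \(S_1\times\dots\times S_k\), and that orbit is a \(G\)‑set. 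I would then need that \(\Aut(F_G)\)‑orbits agree with \(G\)‑orbits and that the relevant quotients come from \(G\)‑sets. This is where I would argue as in \cite[§7.2]{BS15}: \(G\text{-}\Sets\) is an infinite Galois category (or close enough), which requires only that \(G\) act continuously. No Hausdorffness of \(G\) enters, since all the constructions only see the open subgroups of \(G\) (stabilizers). So non-Hausdorffness is harmless, and the argument of \cite[Lemma~2.51]{Lara19} goes through verbatim once one checks that \(G\text{-}\Sets\) with \(F_G\) satisfies the tameness axioms of \cite[Definition~7.2.1]{BS15}. Granting this, \(\Aut(F_G)\) is Noohi by \cite[Theorem~7.2.5]{BS15}.
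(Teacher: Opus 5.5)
Your proposal is correct and follows essentially the same route as the paper. Both take $\Aut(F_G)$ with the compact-open topology and obtain Noohi-ness and the equivalence $G\text{-}\Sets\simeq\Aut(F_G)\text{-}\Sets$ from \cite[Theorem~7.2.5]{BS15}, once $G\text{-}\Sets$ is seen to be a tame infinite Galois category (no Hausdorffness is needed); uniqueness then comes from Lemma~\ref{faithfulness of Tannakian}. The only difference is that you build the factorization $\Aut(F_G)\to\pi$ by hand, restricting automorphisms along $\phi^*$, so that $\beta\circ\eta_G=\phi$ is immediate. The paper instead obtains $\beta$ from \cite[Theorem~7.2.5.2]{BS15} and uses Lemma~\ref{faithfulness of Tannakian} to check that compatibility.
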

\begin{proof}
  Let \(G\) be a topological group.  The category \(G\text{-}(\Sets)\)
  is a tame infinite Galois category.  Let
  \(F\colon G\text{-}(\Sets)\to(\Sets)\) be the forgetful functor and
  set \(G^{\Noohi}:=\Aut(F)\), equipped with the compact‑open topology.
  By \cite[Theorem~7.2.5.1]{BS15} the group \(G^{\Noohi}\) is Noohi,
  and by \cite[Theorem~7.2.5.3]{BS15} the natural map
  \(\alpha\colon G\to G^{\Noohi}\) induces an equivalence
  \(G^{\Noohi}\text{-}(\Sets)\simeq G\text{-}(\Sets)\).

  Let \(\gamma\colon G\to G'\) be a continuous homomorphism with
  \(G'\) a Noohi group.  Then \(G'\) is Hausdorff and has a basis of open subgroups  by
  Lemma~\ref{Small facts about Noohi groups}.  The functor
  \(G'\text{-}(\Sets)\to G\text{-}(\Sets)\) induced by \(\gamma\)
  corresponds, via \cite[Theorem~7.2.5.2]{BS15}, to a continuous
  homomorphism \(\beta\colon G^{\Noohi}\to G'\).  Lemma~\ref{faithfulness of
  Tannakian}
  implies \(\beta\circ\alpha=\gamma\); uniqueness of \(\beta\) follows
  from the same lemma.  Hence \((-)^{\Noohi}\) is a left adjoint to the
  inclusion.
\end{proof}

\begin{defn}\label{def:noohi-quotient}
  Let \(\pi\) be a Noohi group and \(H\subseteq\pi\) a subgroup.
  Denote by \(\langle H\rangle\) the normal subgroup generated by
  \(H\).  The \textit{Noohi quotient of \(\pi\) by \(H\)} is
  \((\pi/\langle H\rangle)^\Noohi\). It corresponds
  to the full subcategory of $\pi\text{-}(\Sets)$ consisting of 
  objects on which $H$ acts trivially. If \(\{f_i,g_i\}_{i\in I}\) are elements of \(\pi\), we often form the
Noohi quotient \((\pi/\langle H\rangle)^\Noohi\) with \(H\) being the
subgroup generated by \(\{f_i^{-1}g_i\}_{i\in I}\).  We refer to this
as the {\it Noohi quotient of \(\pi\) by the relations
  \(\{f_i=g_i\}_{i\in I}\).}\end{defn}

\begin{lem}\label{lem:noohi-coproduct}
  Finite fibred coproducts exist in the category of Noohi groups.
\end{lem}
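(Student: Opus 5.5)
The plan is to build the fibred coproduct first in topological groups and then apply the left adjoint \((-)^{\Noohi}\) of Lemma~\ref{lem:noohi-adjoint}. Since a left adjoint preserves all colimits that exist, the main task is to produce pushouts in the category of topological groups. Let \(H\to G_1\) and \(H\to G_2\) be continuous homomorphisms of Noohi groups, and write \(G_1 *_H G_2\) for the abstract amalgamated product, i.e. the pushout in groups.

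The first step is to topologize \(G_1*_H G_2\). I would give it the finest group topology for which both canonical maps \(G_i\to G_1*_H G_2\) are continuous. This topology exists: take the collection of all group topologies on \(G_1*_H G_2\) making both maps continuous. It is nonempty, because the indiscrete topology belongs to it. The supremum of group topologies is again a group topology, namely the initial topology for the diagonal map into the product. It is exactly here that Lemma~\ref{lem:noohi-adjoint} is needed in the non-Hausdorff generality the paper provides, since this topology may fail to be Hausdorff. Call the resulting topological group \(P\).

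The second step is to check the universal property in topological groups. Let \(\gamma_i\colon G_i\to K\) be continuous homomorphisms that agree on \(H\). They induce an abstract homomorphism \(\gamma\colon G_1*_H G_2\to K\). Pulling back the topology of \(K\) along \(\gamma\) gives a group topology on \(G_1*_H G_2\), and both canonical maps are continuous for it. By maximality, the topology of \(P\) is finer than this pulled-back topology, so \(\gamma\) is continuous on \(P\). Uniqueness of \(\gamma\) is already forced at the level of abstract groups.

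The third step is to set \(G_1\coprod_H G_2 := P^{\Noohi}\) and verify its universal property for a Noohi target \(K\). We have
\[
\mathrm{Hom}(P^{\Noohi},K)=\mathrm{Hom}(P,K)=\mathrm{Hom}(G_1,K)\times_{\mathrm{Hom}(H,K)}\mathrm{Hom}(G_2,K).
\]
The first equality is the adjunction of Lemma~\ref{lem:noohi-adjoint}, and the second is the second step. Hence \(P^{\Noohi}\) is the fibred coproduct in Noohi groups. Arbitrary finite fibred coproducts (several groups over a common \(H\)) follow by iterating this construction. The empty case is the trivial group, which is the initial Noohi group.

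The main obstacle is the point in the first step that the supremum of group topologies is a group topology. I would handle it via the embedding into a product of copies of the group, using continuity of the operations coordinatewise. No further concern about Hausdorffness arises, since \((-)^{\Noohi}\) absorbs it. A more concrete alternative would avoid topologies and use the Galois-category description instead. One would consider the category of sets carrying compatible continuous \(G_1\)- and \(G_2\)-actions that agree on \(H\), check that it is a tame infinite Galois category with the evident fibre functor, and take its \(\Aut\) as in \cite[Theorem~7.2.5]{BS15}. I would mention this as the description the later van Kampen arguments will actually use.
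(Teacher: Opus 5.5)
Your argument is correct, but it is organised differently from the paper's.

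\textbf{The paper's route.} The paper does not construct a pushout in topological groups. It takes the Noohi coproduct \(\pi_1\coprod\pi_2\) supplied by \cite[Example~7.2.6]{BS15}. It then forms the Noohi quotient (Definition~\ref{def:noohi-quotient}) by the relations \(p(x)=q(x)\), \(x\in\pi\). Concretely, it applies \((-)^{\Noohi}\) to \((\pi_1\coprod\pi_2)/\langle p(x)^{-1}q(x)\rangle\) with the quotient topology, which is where non-Hausdorff groups enter. The verification of the universal property is left implicit. Like you, it also mentions the Galois-category description via sets with \(\pi_1\)- and \(\pi_2\)-actions agreeing on \(\pi\).

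\textbf{Your route.} You build the genuine pushout \(P\) in all (not necessarily Hausdorff) topological groups. This is the amalgam \(G_1*_HG_2\) with the finest group topology making both maps continuous, which exists because a supremum of group topologies is again a group topology. You then apply the reflector once, so the universal property follows formally from the adjunction of Lemma~\ref{lem:noohi-adjoint}.

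\textbf{Comparison.} Your approach is more self-contained and actually checks the universal property. It also shows, in passing, that pushouts already exist in all topological groups. The paper's approach produces the fibred coproduct directly in the form ``Noohi coproduct, then Noohi quotient by relations.'' That is the form in which it is used in Lemma~\ref{vk-cons-2} and Proposition~\ref{van kampen machine}. By uniqueness of colimits the two objects agree.

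One cosmetic slip: the fibred coproduct over \(H\) of an empty family is \(H\) itself. The trivial group is the empty coproduct without a base.
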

\begin{proof}
  For a diagram \(\pi_1\xleftarrow{p}\pi\xrightarrow{q}\pi_2\), the
  fibred coproduct \(\pi_1\coprod^\pi\pi_2\) is obtained as the Noohi
  quotient of \(\pi_1\coprod\pi_2\) (cf.~\cite[Example 7.2.6]{BS15}) by the relations
  \(\{p(x)=q(x)\}_{x\in\pi}\). 

Alternatively, it can also be
    defined by the Noohi group corresponding to the tame infinite
    Galois category of discrete sets equipped
with two  actions from $\pi_1$ and $\pi_2$ which
agree on $\pi$.
\end{proof}

\subsection{A van Kampen construction for Noohi groups}

The construction we are about to discuss, in the case of discrete
groups, can be found in \cite[8.4.1, p.~333]{Br06}, where it is attributed
to van Kampen.
Let \(s\geq 1\) and let \(\pi,\pi ',\pi_1'',\ldots,\pi_s''\) be
Noohi groups.  Let \(\psi_i: \pi_i''\to \pi\), \(\phi_i:\pi_i''\to \pi
'\) be continuous homomorphisms.  We define a
(discrete) group \(F\) by generators  \(\{u_{ij}: 1\leq
i,j\leq s\}\) and relations \(u_{ii}=e\), \(u_{ij}u_{jk}=u_{ik}\).
Evidently, \(F\) is free of rank \(s-1\) on the generators
\(v_2,\ldots,v_s\), where \(v_j\coloneqq u_{1j}\) for \(j=2,\ldots,s\).  We
put \(v_1=e\).

\begin{lem}\label{vk-cons-1}
  The following Noohi groups are isomorphic:
  \begin{itemize}
    \item[{\rm(i)}] The Noohi coproduct of \(\pi '\) and \(F\).
    \item[{\rm (ii)}] The Noohi coproduct of \(s\) copies of \(\pi '\),
      and one copy of \(F\), then Noohi
      quotient by the relations \(u_{ij}^{-1}[y]_iu_{ij}=[y]_j\),
      \(1\leq i,j\leq s\), \(y \in \pi '\).  Here \([y]_i\) denote \(y\)
      regarded as an element in the \(i\)-th copy of \(\pi '\).
    \end{itemize}
  \end{lem}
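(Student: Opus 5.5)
The approach is to compare the two groups through their categories of discrete sets with continuous action, i.e. the tame infinite Galois categories that determine them. By the proof of Lemma~\ref{lem:noohi-adjoint} (via \cite[Theorem~7.2.5]{BS15}), two Noohi groups are isomorphic exactly when these categories are equivalent compatibly with the fibre functors. By Definition~\ref{def:noohi-quotient} and Lemma~\ref{lem:noohi-coproduct}:
\begin{itemize}
\item the group in (i) corresponds to the category \(\mathcal C_1\) of sets \(S\) with a continuous \(\pi'\)-action and an arbitrary action of the discrete group \(F\);
\item the group in (ii) corresponds to the category \(\mathcal C_2\) of sets \(S\) with \(s\) continuous \(\pi'\)-actions \(\rho_1,\dots,\rho_s\) and an \(F\)-action satisfying \(u_{ij}^{-1}\rho_i(y)u_{ij}=\rho_j(y)\) for all \(i,j\) and all \(y\in\pi'\).
\end{itemize}

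Rather than working with categories, I would construct mutually inverse continuous homomorphisms directly and use the universal properties.

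\textbf{From (i) to (ii).} Send \(\pi'\) to the first copy via \(y\mapsto[y]_1\), and send \(F\) identically to \(F\). The coproduct property of Lemma~\ref{lem:noohi-coproduct} gives a continuous map \(\alpha\) from (i) to (ii).

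\textbf{From (ii) to (i).} Send the \(j\)-th copy of \(\pi'\) by \([y]_j\mapsto v_j^{-1} y v_j\). This is continuous, being conjugation in the Noohi group (i) composed with the structure map. Send \(F\) identically to \(F\). The relation is respected because \(u_{ij}=v_i^{-1}v_j\), which follows from \(u_{ij}=u_{i1}u_{1j}\) and \(u_{i1}=u_{1i}^{-1}\). Hence
\[
u_{ij}^{-1}\,v_i^{-1}yv_i\,u_{ij}=v_j^{-1}v_i\,v_i^{-1}yv_i\,v_i^{-1}v_j=v_j^{-1}yv_j .
\]
So the map factors through the Noohi quotient and yields a continuous \(\beta\). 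Here I use that a map out of \((G/\langle H\rangle)^{\Noohi}\) into a Noohi group is the same as a map out of \(G\) killing \(H\), by Lemma~\ref{lem:noohi-adjoint}.

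\textbf{Checking the composites.} By Lemma~\ref{faithfulness of Tannakian}, or by uniqueness in the universal properties, it suffices to check \(\beta\alpha\) and \(\alpha\beta\) on the generating images.
\begin{itemize}
\item \(\beta\alpha\) is the identity on \(\pi'\), since \(v_1=e\), and it is the identity on \(F\).
\item \(\alpha\beta\) fixes \(F\) and sends \([y]_j\) to \(v_j^{-1}[y]_1v_j=u_{1j}^{-1}[y]_1u_{1j}\). In (ii) this equals \([y]_j\) by the defining relation with \(i=1\).
\end{itemize}
Since the maps from the coproduct, and then from its Noohi quotient, are determined by their restrictions to the factors, both composites are identities.

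The main obstacle is justifying that maps out of a Noohi quotient of a Noohi coproduct are determined by, and can be defined from, their values on the factors. This uniqueness is not purely group-theoretic, because the images of the factors generate only a dense subgroup. I would handle it as follows. The coproduct and quotient are defined via \((-)^{\Noohi}\) of the abstract free product, suitably topologized, modulo the normal closure. Lemma~\ref{lem:noohi-adjoint} reduces maps out of it to maps out of the abstract object, and those are determined on generators. Alternatively, one can argue at the level of the categories \(\mathcal C_1\) and \(\mathcal C_2\):
\begin{itemize}
\item given \(S\in\mathcal C_1\), set \(\rho_j(y)=v_j^{-1}yv_j\);
\item given \(S\in\mathcal C_2\), keep only \(\rho_1\).
\end{itemize}
The relation with \(i=1\) forces \(\rho_j\) to equal this conjugate, so the two constructions give an equivalence commuting with the forgetful functors.
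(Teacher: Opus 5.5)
Your proposal is correct and follows the paper's proof: the isomorphism is $y\leftrightarrow[y]_1$, $g\leftrightarrow g$, with the homomorphisms in both directions supplied by the universal properties of the Noohi coproduct and Noohi quotient. You make explicit what the paper leaves implicit. Namely, you give the inverse $[y]_j\mapsto v_j^{-1}yv_j$, check via $u_{ij}=v_i^{-1}v_j$ that it respects the relations, and justify uniqueness (coproduct universal property plus the adjunction of Lemma~\ref{lem:noohi-adjoint}) so that both composites are identities.
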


  \begin{proof}  The isomorphism is characterized by \(y
    \leftrightarrow [y]_1\) for \(y \in \pi '\) and \(g
    \leftrightarrow g\) for \(g \in F\).  The universal property of
    coproduct implies that there are homomorphisms in both directions
    matching these elements, and they are inverse to each other.
  \end{proof}

  \begin{lem}\label{vk-cons-2}
    The following Noohi groups are isomorphic:
  \begin{itemize}
    \item[{\rm (i)}] The Noohi coproduct of \(\pi\) and the group in
      \ref{vk-cons-1} {\rm (i)}, Noohi quotient by the relations
      \(\psi_i(a)=v_i^{-1}\phi_i(a)v_i\), \(a \in \pi ''_i\),
      \(i=1,\ldots,s\).
  \item[{\rm(ii)}] The Noohi coproduct of \(\pi\) and the group
    in \ref{vk-cons-1} {\rm (ii)}, Noohi quotient by the relations
    \(\psi_i(a)=[\phi_i(a)]_i\), \(a \in \pi ''_i\), \(i=1,\ldots,s\).
    \item[{\rm (iii)}] The Noohi coproduct of \(F\) with
      \(\pi\coprod\limits^{\pi_1'',\phi_1,\psi_1}\pi '\),
      Noohi quotient by the relations
      \(\psi_i(a)=v_i^{-1}\phi_i(a)v_i\), \(a\in \pi ''_i\), \(i=2,\ldots,s\).
      \item[{\rm (iv)}] The Noohi coproduct of \(F\) and \((\)the Noohi fiber coproduct of
        \(\pi\to\pi\coprod\limits^{\pi_1'',\phi_1,\psi_1}\pi ',\ldots,\pi\to \pi\coprod\limits^{\pi_s'',\phi_s,\psi_s}\pi ')\), Noohi quotient by the
        relations \(u_{ij}^{-1}(e*_iy)u_{ij}=e*_jy\) for all
\(y \in \pi '\), \(i,j=1,\ldots,s\).  Here \(e*_i y\) denote
the product of \(e \in \pi\) and \(y\in \pi '\) in \(\pi\coprod\limits^{\pi_i'',\phi_i,\psi_i}\pi '\).
    \end{itemize}
\end{lem}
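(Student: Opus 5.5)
All four groups will be compared through their universal properties, in the style of the proof of Lemma~\ref{vk-cons-1}. By Lemma~\ref{lem:noohi-adjoint}, each of (i)--(iv) is the Noohi group corepresenting a concrete functor on Noohi groups \(G\). The coproduct and quotient data say that a continuous homomorphism from the group to \(G\) is the same as a tuple of continuous homomorphisms from the factors, together with images of generators of \(F\), satisfying the listed relations. Fibred coproducts (Lemma~\ref{lem:noohi-coproduct}) and Noohi quotients (Definition~\ref{def:noohi-quotient}) behave the same way. So in each case I will write down the set \(\mathrm{Hom}(-,G)\) and produce bijections, natural in \(G\), between these sets. Yoneda then gives the isomorphisms, and the explicit matching of generators gives the maps.

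\textbf{(i)\(\Leftrightarrow\)(ii).} Lemma~\ref{vk-cons-1} gives an isomorphism between the groups of \ref{vk-cons-1}(i) and \ref{vk-cons-1}(ii), sending \([y]_i\mapsto v_i^{-1}yv_i\); indeed \([y]_i=u_{1i}^{-1}[y]_1u_{1i}\) and \(u_{1i}=v_i\). I take the coproduct of this isomorphism with \(\pi\). The relations \(\psi_i(a)=[\phi_i(a)]_i\) then correspond to \(\psi_i(a)=v_i^{-1}\phi_i(a)v_i\), so the Noohi quotients agree.

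\textbf{(i)\(\Leftrightarrow\)(iii).} A homomorphism from (i) to \(G\) consists of homomorphisms \(\alpha\colon\pi\to G\) and \(\beta\colon\pi'\to G\), together with elements \(g_2,\dots,g_s\in G\) (the images of the \(v_j\)), subject to \(\alpha\psi_i=g_i^{-1}\beta\phi_i g_i\). For \(i=1\) we have \(g_1=e\), so this relation reads \(\alpha\psi_1=\beta\phi_1\). This is exactly the condition for \((\alpha,\beta)\) to factor through \(\pi\coprod^{\pi_1''}\pi'\). The remaining relations with \(i\ge2\) are precisely those imposed in (iii).

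\textbf{(iii)/(i)\(\Leftrightarrow\)(iv).} A homomorphism from the fibred coproduct of the \(P_i:=\pi\coprod^{\pi_i''}\pi'\) over \(\pi\) to \(G\) is the same as one common \(\alpha\colon\pi\to G\) together with \(\beta_i\colon\pi'\to G\) satisfying \(\alpha\psi_i=\beta_i\phi_i\). We also need \(h_{ij}\in G\), the images of the \(u_{ij}\), with the cocycle relations; these are determined by \(g_j=h_{1j}\). The relation \(u_{ij}^{-1}(e*_iy)u_{ij}=e*_jy\) gives \(\beta_j=g_j^{-1}\beta_1 g_j\), so every \(\beta_j\) is determined by \(\beta:=\beta_1\). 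The condition \(\alpha\psi_i=\beta_i\phi_i\) then becomes \(\alpha\psi_i=g_i^{-1}\beta\phi_i g_i\), which is the relation set of (i). Conversely, data \((\alpha,\beta,g)\) for (i) define \(\beta_i:=g_i^{-1}\beta g_i\) and \(h_{ij}:=g_i^{-1}g_j\), and these satisfy all the relations of (iv). The two assignments are mutually inverse and natural in \(G\).

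\textbf{Main obstacle.} The bookkeeping is routine. The one point that needs care is that Noohi coproducts, fibred coproducts and quotients really corepresent these naive Hom-functors, in particular that the relations only need to be checked on elements. This holds because a Noohi quotient by relations corepresents homomorphisms to \(G\) killing the normal subgroup \(\langle H\rangle\), by Definition~\ref{def:noohi-quotient} together with the adjunction of Lemma~\ref{lem:noohi-adjoint}. It also uses that \(G\) is Hausdorff (Lemma~\ref{Small facts about Noohi groups}), so continuity is automatic once the maps on the factors are continuous and \(F\) is discrete.
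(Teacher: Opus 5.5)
Your proposal is correct and follows essentially the paper's approach: the paper also compares the groups through the universal properties of Noohi (fibred) coproducts and quotients, matching generators as in Lemma~\ref{vk-cons-1}, with (i)$\Leftrightarrow$(ii) induced by that lemma and (i)$\Leftrightarrow$(iii) immediate. Your explicit Hom-set bookkeeping for (iv) spells out what the paper leaves as ``similar''; routing it through (i) instead of (iii) makes no difference.
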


\begin{proof}  The isomorphism between (i) and (ii) is induced by that
  of Lemma \ref{vk-cons-1}.  The isomorphism between (i) and (iii) is rather
  obvious. The isomorphism between (iii) and (iv) is constructed in a
  way similar to that of Lemma \ref{vk-cons-1}.
\end{proof}

 We will denote the group in Lemma \ref{vk-cons-2} as \[{\bf
    VK}(\pi,\pi
';\pi_1'',\ldots,\pi_s'')_{(\psi_1,\ldots,\psi_s),(\phi_1,\ldots,\phi_s)}\]
  and omit the homomorphisms in subscripts when there is no
  confusion.  The description (i) is the one usually found in the
  literature, e.g.~\cite[8.4.1, p.~333]{Br06}.  It relies on singling
  out the index \(1\) and so does (iii).  The descriptions (ii)
  and (iv) make it clear that the construction actually treats all
  indices in equal footing.  This construction is very useful due to
  the next result.

\def\scrC{\mathscr{C}}
\def\scrD{\mathscr{D}}
\def\scrE{\mathscr{E}}
\begin{sett}
  To state the result, we work in the following situation with
  \(s\geq 1\):
  \begin{itemize}
    \item Let \((\scrC,F)\), \((\scrD,G)\),
  \((\scrE_1,H_1),\ldots,(\scrE_s,H_s)\) be tame infinite Galois
  categories.
  \item For \(j=1,\ldots,s\), let
  \(u_j:\scrC\to \scrE_j\) (resp.~\(v_j:\scrD\to \scrE_j\)) be  a functor
 such that \((\scrC,H_j\circ u_j)\) (resp.~\((\scrD,H_j\circ v_j)\) is a tame infinite Galois
 category, and \(H_j\circ u_j\cong F\)
 (resp.~\(H_j\circ v_j\cong G\)).
 \item Recall that an object of the 2-fibre product
     (\cite[\href{https:https://stacks.math.columbia.edu/tag/003R}{003R}]{stacks-project})
\[
\scrC \mathop{\times}\limits_{\scrE_1\times\cdots\times \scrE_s} \scrD
\]
is a triple \((X,Y,\phi)\), where \(X \in \scrC\), \(Y \in \scrD\),
and \(\phi\) is an isomorphism 
\[
    \bigl(u_j(X)\bigr)_{1\leq j\leq
s}\to\bigl(v_j(Y)\bigr)_{1\leq j\leq s}
\]
in $\scrE_1\times\cdots\times \scrE_s$. We define a functor \(F'\) from this
2-fibre product to \(({\rm Sets})\) by setting \(F'(X,Y,\phi)=F(X)\).
\item For \(j=1,\ldots,s\), choose an isomorphism of fiber functors on
  \(\scrC\colon H_j\circ u_j \xrightarrow{\cong} F\) and denote the
  composition \(\pi_1(\scrE_j,H_j)\to \pi_1(\scrC,H_j\circ u_j) \to
  \pi_1(\scrC,F)\) by \(\psi_j\).  We also choose an isomorphism of fiber functors on
  \(\scrD\colon H_j\circ v_j \xrightarrow{\cong} G\) and denote the
  composition \(\pi_1(\scrE_j,H_j)\to \pi_1(\scrD,H_j\circ v_j) \to
  \pi_1(\scrD,G)\) by \(\phi_j\).
\end{itemize}
\end{sett}

\begin{prop} \label{van kampen machine}
  With the above setting, the pair
\[
(\scrC \mathop{\times}\limits_{\scrE_1\times\cdots\times \scrE_s} \scrD,F')
\]
is a tame infinite Galois category, whose fundamental group is
isomorphic  to
\[
{\bf VK}(\pi_1(\scrC,F),\pi_1(\scrD,G);\pi_1(\scrE_1,H_1),\ldots,\pi_1(\scrE_s,H_s))_{(\psi_1,\ldots,\psi_s),(\phi_1,\ldots,\phi_s)}.
\]

% to the Noohi fibre coproduct
% \[
%  \left( \pi_1(\scrC,F)\mathop{\textstyle\coprod}\limits_{\pi_1(\scrE_1,H_1),\psi_1,\phi_1}\pi_1(\scrD,G)\right)\mathop{\textstyle\coprod}\coprod_{j=2}^s\mathbb{Z}_j,
% \]
% where each \(\mathbb{Z}_j\) is a free group of rank \(1\) with generator
% \(1_j\), Noohi quotient by the relations
% \[
%   \phi_j(a)\cdot 1_j=1_j\cdot \psi_j(a),\hspace{5pt} a \in
%   \pi_1(\scrE_j,H_j),\hspace{5pt} j=2,\ldots,s.
% \]
\end{prop}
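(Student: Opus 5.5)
Via the equivalences $\scrC\simeq \pi\text{-}(\Sets)$ with $\pi:=\pi_1(\scrC,F)$, $\scrD\simeq\pi'\text{-}(\Sets)$ with $\pi':=\pi_1(\scrD,G)$, and $\scrE_j\simeq \pi_j''\text{-}(\Sets)$ with $\pi_j'':=\pi_1(\scrE_j,H_j)$ (tame infinite Galois categories are classified by their Noohi groups, \cite[Theorem~7.2.5]{BS15}), the plan is to show that the 2-fibre product, together with $F'$, is equivalent to $\Pi\text{-}(\Sets)$ with its forgetful functor, where $\Pi$ is the group ${\bf VK}(\pi,\pi';\pi''_1,\ldots,\pi''_s)$. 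Since $\Pi$ is Noohi, this gives both tameness and the identification of the fundamental group at once. The chosen isomorphisms $H_j\circ u_j\cong F$ and $H_j\circ v_j\cong G$ identify $u_j$ with restriction along $\psi_j$ and $v_j$ with restriction along $\phi_j$, on the level of fibres.

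I would use description (ii) of Lemma~\ref{vk-cons-2}. An object $(X,Y,\phi)$ unfolds as follows: a discrete set $S=F(X)$ with a continuous $\pi$-action; a discrete set $T=G(Y)$ with a continuous $\pi'$-action; and bijections $\phi_j\colon S\to T$ for $j=1,\ldots,s$, each $\pi''_j$-equivariant with respect to $\psi_j$ on $S$ and $\phi_j$ on $T$. I will write the bijections as $\theta_j$ to avoid a clash with the homomorphisms. From this data I would build an action of $\Pi$ on $S$:
\begin{itemize}
\item $\pi$ acts as given;
\item the $j$-th copy $[y]_j$ of $\pi'$ acts by $\theta_j^{-1}\circ y\circ\theta_j$;
\item $u_{ij}$ acts by $\theta_i^{-1}\theta_j$, or its inverse depending on conventions.
\end{itemize}
Then $u_{ii}=e$ and $u_{ij}u_{jk}=u_{ik}$ hold trivially. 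The conjugation relation $u_{ij}^{-1}[y]_iu_{ij}=[y]_j$ holds by construction. The relation $\psi_j(a)=[\phi_j(a)]_j$ is exactly the equivariance of $\theta_j$. Each piece acts continuously on a discrete set, so the action of the discrete coproduct factors continuously through the Noohi coproduct, and by Lemma~\ref{lem:noohi-adjoint} and Definition~\ref{def:noohi-quotient} through the Noohi quotient $\Pi$.

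Conversely, a $\Pi$-set $S$ gives the following data. $X$ is $S$ with its $\pi$-action. $Y$ is $S$ with the $\pi'$-action through the first copy $[\cdot]_1$. Finally $\theta_j:=$ the action of $u_{1j}$, with the convention chosen so that the conjugation relation turns $\theta_j$ into a map from the $[\cdot]_j$-action to the $[\cdot]_1$-action. The equivariance of $\theta_j$ then comes from the relation $\psi_j(a)=[\phi_j(a)]_j$ combined with the conjugation relation.

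On morphisms, a morphism $(f,g)$ in the 2-fibre product with $\theta_j\circ f=g\circ\theta_j$ determines $g$ from $f$ (via $\theta_1$, say). So morphisms correspond to $\Pi$-equivariant maps of $S$, giving a fully faithful essentially surjective functor compatible with $F'$ and the forgetful functor. I would normalize $\theta_1$ or absorb it, which is exactly the passage (ii)$\leftrightarrow$(i) in Lemmas~\ref{vk-cons-1} and~\ref{vk-cons-2}. These two constructions should be mutually quasi-inverse.

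The main obstacle is bookkeeping rather than depth. First, the index and left/right conventions must be checked so that each relation matches precisely, with the correct direction of $u_{ij}$ and whether $\phi$ goes from $u$ to $v$. Second, one must verify that continuity of the actions is preserved: a $\Pi$-set restricts to continuous actions of $\pi$, $\pi'$ and the $\pi_j''$ because the maps to $\Pi$ are continuous. Third, one must check that the induced functor $F'$ is the fibre functor, i.e.\ that $\Aut(F')\cong\Pi$ as topological groups. This last point follows from the equivalence and the Noohi property of $\Pi$ (Definition~\ref{def:noohi}).
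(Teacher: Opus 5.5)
Your proposal is correct and follows the paper's strategy. You reduce via \cite[Theorem~7.2.5]{BS15} to categories of discrete $\pi$-, $\pi'$- and $\pi''_j$-sets. You then read objects of the 2-fibre product as sets with continuous actions together with equivariant bijections $\theta_j$, and identify these with $\Pi$-sets.

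The only difference is bookkeeping, and your choice is a sound one. The paper fixes index $1$, uses $\theta_1$ to identify $T$ with $S$, and matches presentation (iii) of Lemma~\ref{vk-cons-2}: the amalgam over $\pi''_1$ together with free generators $v_2,\dots,v_s$. You instead match the symmetric presentation (ii), and you also check morphisms and the quasi-inverse explicitly, which the paper leaves implicit. Your choice $u_{ij}\mapsto\theta_i^{-1}\theta_j$, with $\theta_j$ given by the action of $u_{1j}$ in the converse direction, satisfies all the relations.
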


\begin{proof}
  We proceed in three steps.

  \textbf{Step 1: Reduction to group actions.}
  By \cite[Theorem~7.2.5]{BS15} every tame infinite Galois category
  with a fiber functor is equivalent to the category of discrete sets
  with a continuous action of its fundamental group.  Applying this to
  $\scrC,\scrD$ and the $\scrE_j$, we may assume without loss of
  generality that
  \[
    \scrC \simeq \pi_{\scrC}\text{-}\Sets,\quad
    \scrD \simeq \pi_{\scrD}\text{-}\Sets,\quad
    \scrE_j \simeq \pi_j\text{-}\Sets,
  \]
  where $\pi_{\scrC} = \pi_1(\scrC,F)$, $\pi_{\scrD} = \pi_1(\scrD,G)$,
  and $\pi_j = \pi_1(\scrE_j,H_j)$.  The functors $u_j$ and $v_j$, as
  well as the chosen compatibilities of fiber functors, translate
  through Tannakian duality into the continuous homomorphisms
  $\psi_j : \pi_j \to \pi_{\scrC}$ and $\phi_j : \pi_j \to \pi_{\scrD}$
  described in Setting~2.8.  The fiber functor $F'$ on the fibred
  product becomes the forgetful functor to $\Sets$.

  \textbf{Step 2: Unpacking the fibred product.}
  An object of the 2‑fibre product
  $\scrC' = \scrC \times_{\prod_j \scrE_j} \scrD$
  is a triple $(A,B,\{\lambda_j\}_{j=1}^s)$ where $A$ is a
  $\pi_{\scrC}$-set, $B$ is a $\pi_{\scrD}$-set, and
  $\lambda_j : A \xrightarrow{\cong} B$ are bijections of $\pi_j$-sets,
  with $A$ viewed as a $\pi_j$-set via $\psi_j$ and $B$ via $\phi_j$.
  We fix the index~$1$ and use $\lambda_1$ to identify $B$ with $A$.
  Then $A$ carries commuting actions of $\pi_{\scrC}$ and $\pi_{\scrD}$
  that are intertwined along $\pi_1$ through the relation
  $\psi_1(a) = \phi_1(a)$ for $a \in \pi_1$.  This precisely means that
  $A$ is a discrete set with a continuous action of the Noohi fibred
  coproduct
  \[
    \Pi_1 \;:=\; \pi_{\scrC}
    \mathop{\textstyle\coprod}\limits_{\pi_1,\,\psi_1,\,\phi_1}
    \pi_{\scrD}.
  \]

  The remaining isomorphisms $\lambda_j$ ($j\ge 2$) become bijections
  $\lambda_j : A \to A$ that satisfy the equivariance condition
  \[
    \phi_j(a) \circ \lambda_j \;=\; \lambda_j \circ \psi_j(a)
    \qquad (a\in \pi_j).
  \]
  We regard each $\lambda_j$ as an automorphism of the underlying
  set $A$ and let $F$ be the free discrete group generated by symbols
  $v_2,\dots,v_s$ (with $v_1 = e$) that act on $A$ via
  $v_j \cdot x := \lambda_j(x)$.  The condition above rewrites as
  \[
    \psi_j(a) \;=\; v_j^{-1}\,\phi_j(a)\,v_j
    \quad\text{in the group of automorphisms of }A,
  \]
  where $\phi_j(a)$ is viewed as an element of $\pi_{\scrD}$ and hence
  of $\Pi_1$.

  \textbf{Step 3: Identification of the fundamental group.}
  The data $(A,\{\lambda_j\}_{j\ge 2})$ is exactly the same as a
  discrete set with an action of the group
  \[
    \Pi \;:=\;
    \Bigl(
      \Pi_1 \coprod F
    \Bigr)
    \Big/ \big\langle
      \psi_j(a) = v_j^{-1}\phi_j(a)v_j
      \;\big|\; a\in\pi_j,\; j=2,\dots,s
    \big\rangle^{\Noohi},
  \]
  where the relations are imposed in the sense of
  Definition~\ref{def:noohi-quotient}.  By Lemma~\ref{vk-cons-2}~(iii)
  this group is precisely
  $\mathbf{VK}(\pi_{\scrC},\pi_{\scrD};\pi_1,\dots,\pi_s)_{(\psi),(\phi)}$.
  The free group $F$ accounts for the additional identifications
  $\lambda_2,\dots,\lambda_s$ after the first index has been used to
  identify the two sides.

  Thus the pair $(\scrC',F')$ is equivalent to the category of
  discrete $\Pi$-sets with the forgetful functor.  By
  \cite[Example~7.2.2]{BS15} the fundamental group of
  $(\scrC',F')$ is isomorphic to $\Pi$, which completes the proof.
\end{proof}

%\begin{rmk} With the notation of the proof, it is obvious that under the identification
%  \(\pi_1(\scrC ',F')\simeq \pi\), the maps \(\pi_1(\scrC ,F) \to
%  \pi_1(\scrC ',F')\), \(\pi_1(\scrD,G)) \to \pi_1(\scrC ',F')\)
%  correspond to the canonical maps \(\pi_1(\scrC,F)\to \pi\),
%  \(\pi_1(\scrD,G)\to \pi\).
%\end{rmk}

\begin{rmk}
More sophisticated forms of this sort of result can be found in
\cite[Corollary 5.3, p.~19]{stix2006} and \cite[Corollary 3.18,
p.~32]{Lara19}.  However, the form given here is more readily
applicable and is enough for almost all known applications, such as
\cite[Theorem~1.17]{Elena18}.  More applications are given in the next few subsections.
%  Ideas along the same lines can be found 
%  in \cite[8.4.1, p.~333]{Br06} which are documented in a great
%     generality. They have been used by E. Lavanda in \cite[Theorem
%     1.17, p.~33]{Elena18}. Similar ideas have also been used in
%     \cite[Corollary 5.3, p.~19]{stix2006} and \cite[Corollary 3.18, p.~32]{Lara19} to build up van Kampen style theorems. We will refer
% to Proposition \ref{van kampen machine}  as the \textit{combinatorial folklore}.
\end{rmk}

\section{The pro‑\'etale fundamental group}

\subsection{Van Kampen theorems for geometric covers}
\label{pass to reduced structure} From now on we assume that $X$ is a locally topologically
    Noetherian scheme. Let $\Cov(X)$ denote the category of \textit{geometric covers of
    $X$} in the sense of \cite[Definition 7.3.1]{BS15}.  This
  construction gives a category fibered over the category of locally
  topological Noetherian schemes in an obivious way.
  
  Let $X_\red$
    denote the reduced induced scheme structure of $X$. Then by
    \cite[Exposé VIII, Théorème 1.1, p.~247]{SGA4} or \cite[Exposé IX, 4.10,
    p.~186]{SGA1} we can conclude that
    $\Cov(X)\xrightarrow{\simeq}\Cov(X_\red)$ just as in
    \cite[Lemma 1.15]{Elena18}. Thus if $X$ is a locally
    Noetherian connected scheme and $x\in X$ is a geometric point, then
    $\pi_1^\pet(X_\red,x)\to\pi_1^\pet(X,x)$ is an isomorphism.

\begin{lem}\label{general van kampen}
    Let $f_1:Z_1\rightarrow X, f_2: Z_2\to X$ be monomorphisms
    of
    schemes.  Assume that the induced map
    \(Z_1\coprod
Z_2\to X\) is a morphism of effective descent for  \(\Cov(-)\).  
  Then we have an equivalence given by the pullback functor:
  \[\Cov(X)\simeq \Cov(Z_1)\times_{\Cov(Z)}\Cov(Z_2)\]
  where $Z=  Z_1\times_XZ_2$. 
\end{lem}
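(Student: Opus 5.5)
We deduce the equivalence from the descent hypothesis by computing the descent category for the cover $Y := Z_1\coprod Z_2\to X$ explicitly. By hypothesis, the pullback functor identifies $\Cov(X)$ with the category of descent data: pairs $(T,\varphi)$ with $T\in\Cov(Y)$ and $\varphi\colon p_1^*T\xrightarrow{\cong}p_2^*T$ an isomorphism over $Y\times_XY$ satisfying the cocycle condition on $Y\times_XY\times_XY$. The task is to identify this category with the 2-fibre product $\Cov(Z_1)\times_{\Cov(Z)}\Cov(Z_2)$.

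The first step is to decompose the fibre products using the monomorphism hypothesis. Since $f_i$ is a monomorphism, $Z_i\times_XZ_i\cong Z_i$ via the diagonal. Hence
\[
Y\times_XY\;=\;Z_1\;\coprod\; Z\;\coprod\; Z'\;\coprod\; Z_2,
\]
where $Z=Z_1\times_XZ_2$ and $Z'=Z_2\times_XZ_1\cong Z$ via the swap. Similarly, $Y\times_XY\times_XY$ is a disjoint union of eight pieces, each isomorphic to $Z_1$, $Z_2$, or $Z$; any triple product containing both indices collapses to $Z$, again by the monomorphism property. Since $\Cov$ of a disjoint union is the product of the $\Cov$'s (geometric covers are étale-local and disjoint unions are preserved), we can write $T=(T_1,T_2)$. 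Then $\varphi$ splits into four components:
\begin{itemize}
\item $\varphi_{11}\in\Aut(T_1)$ and $\varphi_{22}\in\Aut(T_2)$, the components over the diagonal pieces;
\item $\varphi_{12}\colon T_1|_Z\to T_2|_Z$ and $\varphi_{21}\colon T_2|_Z\to T_1|_Z$.
\end{itemize}

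The second step is to unwind the cocycle condition on each of the eight triple pieces. On the diagonal piece $(i,i,i)$ it reads $\varphi_{ii}\circ\varphi_{ii}=\varphi_{ii}$, so $\varphi_{ii}=\mathrm{id}$; this is the usual fact that the diagonal restriction of a descent datum is the identity. On the pieces $(1,2,1)$ and $(1,1,2)$, after restriction to $Z$, we obtain $\varphi_{21}\circ\varphi_{12}=\mathrm{id}$ and the relations forced by $\varphi_{11}=\mathrm{id}$; so $\varphi_{21}=\varphi_{12}^{-1}$. The remaining pieces give only tautologies. A descent datum is therefore exactly a triple $(T_1,T_2,\varphi_{12})$ with $\varphi_{12}\colon T_1|_Z\cong T_2|_Z$, which is an object of the 2-fibre product. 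Morphisms of descent data are pairs of maps compatible with $\varphi_{12}$, which matches morphisms in the 2-fibre product. Composing with the descent equivalence, and checking that the resulting functor $\Cov(X)\to\Cov(Z_1)\times_{\Cov(Z)}\Cov(Z_2)$ is the pullback $S\mapsto(f_1^*S,f_2^*S,\text{can})$, finishes the proof.

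The main obstacle is the bookkeeping in the second step: one must identify each triple fibre product correctly, using $Z_1\times_XZ_2\times_XZ_1\cong Z$ via the monomorphism property, and verify that the pullbacks of the components $\varphi_{ij}$ along the three projections are the expected restrictions. Once this is done, the derivation of $\varphi_{ii}=\mathrm{id}$ and $\varphi_{21}=\varphi_{12}^{-1}$ is routine.
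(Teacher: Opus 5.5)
Your proposal is correct and follows the paper's proof: both identify $\Cov(X)$ with descent data for $Z_1\coprod Z_2\to X$, use the monomorphism hypothesis to collapse $Z_i\times_X Z_i$ to $Z_i$, and read off from the cocycle condition that $\varphi_{11},\varphi_{22}$ are identities while $\varphi_{12},\varphi_{21}$ are mutually inverse isomorphisms $T_1|_Z\simeq T_2|_Z$. You additionally spell out the piece-by-piece bookkeeping on the double and triple fibre products, which the paper dismisses as routine.
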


\begin{proof} 
    %We refer to Stack Project 023B for
  %the notion of descent, and Stack Project 003R for the definition of
  %fiber products of categories.  
    By the effectiveness of descent, an
  object of \(\Cov(X)\) amounts to a descent datum, which consists of \(Y_i\in
  \Cov(Z_i)\), \(i=1,2\) together with \(\varphi_{ij}: p_1^* Y_i \to
  p_2^* Y_j\) for \(i,j \in \{1,2\}\) satisfying the cocycle
  condition.  The cocycle condition says that \(\varphi_{11}\) and \(\varphi_{22}\) are
  just identities (by the monomorphism assumption) and
  \(\varphi_{12},\varphi_{21}\) give
  isomorphisms \(Y_1|_Z \simeq Y_2|_Z\), inverse to each other.  Thus we get an object in the
  \(\Cov(Z_1) \times_{\Cov(Z)} \Cov(Z_2)\).  It is routine to verify
  that this construction gives an equivalence.
  % 023B is not the ideal reference.  Most references discuss this
  % in the context of a site.  But we only want to talk about descent
  % along a family of morphisms, and 023B did so, although
  % specifically for coherent sheaves.
\end{proof}

\begin{ex} \label{van kampen}
  The asssumption of Lemma~\ref{general van kampen} is satisfied when
  \(Z_1, Z_2\) are open subschemes of \(X\) such that
  \(X=Z_1\cup Z_2\).  This is due to the fact that geometric covers are locally
  constant sheaves in the pro-étale topology (cf.~\cite[Lemma 7.3.9]{BS15}), so in particular, they satisfy
  pro-étale descent.
\end{ex}

\begin{ex}\label{closed van kampen}
The asssumption of Lemma~\ref{general van kampen} is satisfied when
\(X\) is locally Notherian and
  \(Z_1, Z_2\) are closed subschemes of \(X\) such that
  \(X=Z_1\cup Z_2\) set-theoretically. This is a consequence of
  \cite[Proposition 1.16]{Elena18} which relies on Rydh's work \cite{Rydh10}
  generalizing \cite[IX Théorème 4.12]{SGA1}. 
\end{ex}

\begin{lem} \label{closed van kampen v2}
Let $X$ be a locally Noetherian scheme. Let $Z\subseteq X$ be a closed subscheme.
Consider a proper surjective map
\[\tilde{X}\xrightarrow{\hspace{5pt}f\hspace{5pt}}X\]
Denote $Z\times_X \tilde{X}$ by \(\tilde Z\). Suppose that the union
of  the image
of the two closed immersions
\[
  \Delta_f:\tilde X\to \tilde X\times_X\tilde X,\quad
  \tilde Z\times_Z \tilde Z \to \tilde X \times_X  \tilde X
\]
is \(\tilde X\times _X \tilde X\) set-theoretically.
Then the pullback functor induces an equivalence of categories:
\begin{equation}\label{eq:Ziel}\Cov(X)\xrightarrow{\cong} \Cov(\tilde{X})\times_{\Cov(\tilde
Z)}\Cov(Z)\end{equation}
\end{lem}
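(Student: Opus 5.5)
The plan is to combine two descent statements: proper descent along \(f\) for geometric covers, and the closed van Kampen statement of Example~\ref{closed van kampen} applied to \(\tilde X\times_X\tilde X\). The first step establishes that \(f\) is a morphism of effective descent for \(\Cov(-)\). Geometric covers are étale, separated and satisfy the valuative criterion, and proper surjective morphisms of locally Noetherian schemes are of effective descent for étale separated morphisms. This is Rydh's theorem \cite{Rydh10}, generalizing \cite[IX Théorème 4.12]{SGA1}, exactly as used in \cite[Proposition 1.16]{Elena18}. Since the conditions defining a geometric cover can be checked after a proper surjective base change, the descended object is again a geometric cover. Hence \(\Cov(X)\) is equivalent to the category of pairs \((Y,\varphi)\), where \(Y\in\Cov(\tilde X)\) and \(\varphi\colon p_1^*Y\xrightarrow{\cong}p_2^*Y\) on \(\tilde X\times_X\tilde X\) satisfies the cocycle condition on \(\tilde X\times_X\tilde X\times_X\tilde X\).

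The second step analyzes the descent datum using the hypothesis. Write \(W=\tilde X\times_X\tilde X\), \(W_1=\Delta_f(\tilde X)\) and \(W_2=\tilde Z\times_Z\tilde Z\). By hypothesis \(W=W_1\cup W_2\) set-theoretically, and both are closed subschemes, so Example~\ref{closed van kampen} together with Lemma~\ref{general van kampen} gives
\[
\Cov(W)\simeq\Cov(W_1)\times_{\Cov(W_1\cap W_2)}\Cov(W_2).
\]
Here \(W_1\cap W_2\) is identified with \(\tilde Z\) embedded diagonally. Morphisms between covers of \(W\) are therefore determined by their restrictions to \(W_1\) and \(W_2\), provided these agree on \(\tilde Z\). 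The cocycle condition, restricted to the diagonal, forces \(\varphi|_{W_1}=\mathrm{id}\). Consequently \(\varphi\) is the same as an isomorphism \(\varphi_2\colon p_1^*(Y|_{\tilde Z})\to p_2^*(Y|_{\tilde Z})\) on \(\tilde Z\times_Z\tilde Z\) that restricts to the identity on the diagonal of \(\tilde Z\).

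The third step compares this with descent along \(\tilde Z\to Z\). That map is again proper surjective, so the same Rydh argument shows that \(Y|_{\tilde Z}\) together with \(\varphi_2\) is a descent datum for \(\tilde Z\to Z\). The cocycle condition for \(\varphi_2\) on \(\tilde Z\times_Z\tilde Z\times_Z\tilde Z\) is the restriction of the cocycle condition for \(\varphi\), which holds by the second step. Conversely, given a triple \((Y,T,\alpha)\) with \(T\in\Cov(Z)\) and \(\alpha\colon Y|_{\tilde Z}\cong T|_{\tilde Z}\), the canonical descent isomorphism of \(T|_{\tilde Z}\), transported by \(\alpha\), gives \(\varphi_2\). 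Gluing \(\varphi_2\) with the identity on \(W_1\) gives \(\varphi\), and the cocycle condition for \(\varphi\) is again checked using the same closed-cover decomposition on the triple product. Matching these two constructions identifies the descent data with the right-hand side of \eqref{eq:Ziel}, compatibly with the pullback functor, and fully faithfulness follows in the same way from the corresponding statements for morphisms.

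The main obstacle is the cocycle condition on the triple product \(\tilde X\times_X\tilde X\times_X\tilde X\). It requires a set-theoretic decomposition of the triple product into pieces on which \(\varphi\) is controlled. I would try the decomposition by the loci where pairs of points coincide or all points lie over \(Z\): a point lying over \(X\setminus Z\) has all three coordinates in a single fibre, which by the hypothesis is set-theoretically diagonal. I would then apply Example~\ref{closed van kampen} repeatedly, because equality of morphisms of étale separated covers can be checked on a closed cover set-theoretically. A further technical point is that \(W_1\cap W_2\) equals \(\tilde Z\) only up to nilpotents. This is harmless because \(\Cov\) is insensitive to the reduced structure, by §\ref{pass to reduced structure}.
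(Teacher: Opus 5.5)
Your proposal is correct and takes essentially the same route as the paper. Both use proper descent along $f$ and along $\tilde Z\to Z$, together with the closed van Kampen decomposition of $\tilde X\times_X\tilde X$ into the diagonal and $\tilde Z\times_Z\tilde Z$, and of the triple product into the small diagonal and $\tilde Z^{\times_Z 3}$. The paper packages this as an equivalence of truncated cosimplicial diagrams and commutes the $2$-limit with the fibre product, whereas you carry out the same argument by hand on descent data $(Y,\varphi)$.
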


\begin{proof}
  Consider the Čech nerve of the proper surjective map \(f\).  Since
  \(\Cov(-)\) satisfies proper descent by
  \cite[Proposition~1.16]{Elena18}, the category \(\Cov(X)\) is
  equivalent to the \(2\)-limit (the category of descent data) of the
  truncated cosimplicial diagram
  \[
    \Cov(\tilde X) \rightrightarrows \Cov(\tilde X^{\times_X 2})
    \triarrows \Cov(\tilde X^{\times_X 3}).
  \]
  Analogously, \(\Cov(Z)\) is the \(2\)-limit of the cosimplicial
  diagram for the proper surjective map \(\tilde Z \to Z\):
  \[
    \Cov(\tilde Z) \rightrightarrows \Cov(\tilde Z^{\times_Z 2})
    \triarrows \Cov(\tilde Z^{\times_Z 3}).
  \]

  By Lemma~\ref{closed van kampen} applied to the closed cover of
  \(\tilde X^{\times_X 2}\) by \(\Delta_f(\tilde X)\) and
  \(\tilde Z^{\times_Z 2}\), we have a natural equivalence
  \begin{equation}\label{eq:double}
    \Cov(\tilde X^{\times_X 2})
    \xrightarrow{\;\sim\;}
    \Cov(\tilde X) \times_{\Cov(\tilde Z)} \Cov(\tilde Z^{\times_Z 2}).
  \end{equation}
  The same lemma applied to the triple product
  \(\tilde X^{\times_X 3}\) (covered by the triple diagonal and
  \(\tilde Z^{\times_Z 3}\)) yields a compatible equivalence
  \begin{equation}\label{eq:triple}
    \Cov(\tilde X^{\times_X 3})
    \xrightarrow{\;\sim\;}
    \Cov(\tilde X) \times_{\Cov(\tilde Z)} \Cov(\tilde Z^{\times_Z 3}).
  \end{equation}
  The face maps between the double and triple products are induced by
  pullbacks along the various projections, and the equivalences above
  intertwine these maps.

  Plugging \eqref{eq:double} and \eqref{eq:triple} into the
  cosimplicial diagram for \(X\) yields an equivalence of diagrams
  \begin{align*}
    &\bigl(\Cov(\tilde X) \rightrightarrows \Cov(\tilde X^{\times_X 2})
    \triarrows \Cov(\tilde X^{\times_X 3})\bigr)\\
      \;\simeq\;&
    \bigl(\Cov(\tilde X) \rightrightarrows
    \Cov(\tilde X)\times_{\Cov(\tilde Z)}\Cov(\tilde Z^{\times_Z 2})
    \triarrows
    \Cov(\tilde X)\times_{\Cov(\tilde Z)}\Cov(\tilde Z^{\times_Z 3})\bigr).
\end{align*}
  The second diagram is the product of the constant cosimplicial object
  \(\Cov(\tilde X)\) with the Čech nerve of \(\tilde Z \to Z\).  Limits
  commute with fiber products in this sense, hence
  \[
    \Cov(X) \;\simeq\;
    \Cov(\tilde X) \times_{\Cov(\tilde Z)}
    \bigl( \lim \Cov(\tilde Z^{\bullet+1}) \bigr).
  \]
  But \(\lim \Cov(\tilde Z^{\bullet+1})\) is exactly the category of
  descent data for \(\tilde Z \to Z\), which by proper descent is
  equivalent to \(\Cov(Z)\).  Thus
  \[
    \Cov(X) \;\simeq\; \Cov(\tilde X) \times_{\Cov(\tilde Z)} \Cov(Z),
  \]
  as required.
\end{proof}

\subsection{Schemes with connected singular locus}

Throughout this section and next ones, \(X\) denotes a connected Nagata J‑2 scheme.
Let \(X_1,\dots,X_n\) be its irreducible components.  Denote by
\(f\colon\tilde X\to X\) the normalisation of \(X_\red\) relative to
the disjoint union of the spectra of its generic points.  The map
\(f\) is finite and surjective.  Let \(Z\subseteq X\) be the singular
locus of \(X_\red\) equipped with the reduced induced structure, and
set \(\tilde Z:=\tilde X\times_X Z\).

For each \(i=1,\dots,n\) let \(\tilde X_i\) be the connected component
of \(\tilde X\) corresponding to \(X_i\).  Decompose
\(\tilde X_i\cap\tilde Z\) into its connected components
\(Z_{i1},\dots,Z_{in_i}\).

\begin{thm}\label{thm:structure-connected-covers}
  The pullback functor induces an equivalence
 \begin{center}
\resizebox{13cm}{!}{\parbox{\linewidth}{ \[
    \Cov(X)\xrightarrow{\;\simeq\;}
    \prod^{1\leq i\leq
n}_{\Cov(Z)}\,
    \Bigl(
      \Cov(\tilde X_i)
      \mathop{\times}\limits_{\prod_{j=1}^{n_i}\Cov(Z_{ij})}
      \Cov(Z)
    \Bigr)
\]}}\end{center}
\end{thm}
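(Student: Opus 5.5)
We first reduce to the reduced scheme, using the equivalence $\Cov(X)\simeq\Cov(X_\red)$ from \S\ref{pass to reduced structure}; so we may assume $X$ is reduced. Then we apply Lemma~\ref{closed van kampen v2} to the finite surjective (hence proper) normalization $f\colon\tilde X\to X$ and the closed subscheme $Z$. This gives
\[
\Cov(X)\xrightarrow{\simeq}\Cov(\tilde X)\times_{\Cov(\tilde Z)}\Cov(Z).
\]
Next we unpack the right-hand side. Since $\tilde X=\coprod_i\tilde X_i$ and $\tilde Z=\coprod_{i,j}Z_{ij}$ with $Z_{ij}\subseteq\tilde X_i$, and $\Cov(-)$ turns disjoint unions into products, we get
\[
\Cov(\tilde X)=\prod_i\Cov(\tilde X_i),\qquad \Cov(\tilde Z)=\prod_i\prod_j\Cov(Z_{ij}).
\]
The functor $\Cov(Z)\to\Cov(\tilde Z)$ is then the tuple of pullbacks along $Z_{ij}\to Z$. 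A formal manipulation of 2-fibre products, namely distributing the diagonal $\Cov(Z)\to\prod_i\Cov(Z)$, identifies $\prod_i\Cov(\tilde X_i)\times_{\prod_{i}\prod_j\Cov(Z_{ij})}\Cov(Z)$ with the iterated fibre product over $\Cov(Z)$ of the categories $\Cov(\tilde X_i)\times_{\prod_j\Cov(Z_{ij})}\Cov(Z)$. This is exactly the target in the statement. Concretely, an object on either side is a tuple $(Y_i)$ with $Y_i\in\Cov(\tilde X_i)$, together with $W\in\Cov(Z)$ and isomorphisms $Y_i|_{Z_{ij}}\simeq W|_{Z_{ij}}$. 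On the right one has copies $W_i$ with isomorphisms $W_i\simeq W_{i'}$ in $\Cov(Z)$, which can be rigidified to a single $W$.

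The main work is verifying the hypothesis of Lemma~\ref{closed van kampen v2}. We must show that $\tilde X\times_X\tilde X$ is set-theoretically the union of $\Delta_f(\tilde X)$ and $\tilde Z\times_Z\tilde Z$. Take a point $p$ of $\tilde X\times_X\tilde X$ lying over $x\in X$. If $x\in Z$, then $p$ lies in $\tilde Z\times_Z\tilde Z=(\tilde X\times_X\tilde X)\times_X Z$. If $x\notin Z$, then $X$ is regular, hence normal, at $x$; in particular $x$ lies on a single irreducible component. Since $X$ is J-2, $U:=X\setminus Z$ is open, and $f^{-1}(U)\to U$ is the normalization of a normal scheme, hence an isomorphism. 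So $f$ is a monomorphism over $U$, and $p$ lies on the diagonal. We use the Nagata hypothesis only for finiteness of $f$, which makes it proper, and J-2 only for closedness of $Z$. The delicate points are that "regular implies normal implies the normalization is an isomorphism" requires reducedness, which we have after the first reduction, and that the locus where $X_\red$ fails to be normal is contained in the singular locus $Z$. Both are standard.

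Finally we check that all equivalences are induced by pullback, so the composite functor is the pullback functor as claimed. We expect the main obstacle to be bookkeeping the 2-categorical fibre products with the diagonal over $\Cov(Z)$, rather than any geometric input.
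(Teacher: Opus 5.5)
Your proposal is correct and follows the paper's proof. You apply Lemma~\ref{closed van kampen v2} to the finite surjective normalization $f\colon\tilde X\to X$ and to $Z$, then split $\Cov(\tilde X)$ and $\Cov(\tilde Z)$ into products over the $\tilde X_i$ and the $Z_{ij}$, and rearrange the 2-fibre product over the diagonal of $\Cov(Z)$. Your verification that $\tilde X\times_X\tilde X=\Delta_f(\tilde X)\cup\tilde Z\times_Z\tilde Z$ set-theoretically, using that $f$ is an isomorphism over the open regular locus $X\setminus Z$, is sound and supplies a hypothesis the paper leaves unchecked.
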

\begin{proof}  By Lemma \ref{closed van kampen v2} the  pullback map
 \begin{center}
\resizebox{13cm}{!}{\parbox{\linewidth}{
    \[
    \begin{aligned}\Cov(X)\xrightarrow{\ \simeq\ }&\left(\Cov(\coprod_{1\leq i\leq
                n}\tilde{X}_i)\mathop{\times}\limits_{\Cov(\coprod_{ij}Z_{ij})}\Cov(Z)\right)\\=&\left[\left(\prod_{1\leq i\leq
n}\Cov(\tilde{X}_i)\right)\mathop{\times}\limits_{\prod_{ij}\Cov(Z_{ij})}\Cov(Z)\right]\end{aligned}\]}}\end{center}
        is
an equivalence. Then one checks easily that the later is the
same as the right hand side in the statement of the theorem.
\end{proof}

Now choose geometric points \(x\in Z\) and \(x_{ij}\in Z_{ij}\).
Identifying the fibre functors \(F_x\) and \(F_{f(x_{ij})}\), we obtain
isomorphisms
\[
  \pi_1^\pet(Z,x)\simeq\pi_1^\pet(Z,f(x_{ij})).
\]
Let \(\phi_{ij}\colon\pi_1^\pet(Z_{ij},x_{ij})\to\pi_1^\pet(Z,x)\) be
the map induced by \(Z_{ij}\to Z\).  Similarly, choose paths inside
\(\tilde X_i\) to identify
\(\pi_1^\pet(\tilde X_i,x_{ij})\simeq\pi_1^\et(\tilde X_i,x_{i1})\),
and let \(\psi_{ij}\) be the composition
\(\pi_1^\pet(Z_{ij},x_{ij})\to\pi_1^\pet(\tilde X_i,x_{ij})\simeq
\pi_1^\et(\tilde X_i,x_{i1})\).

\begin{thm}\label{thm:main-structure}
  There is an isomorphism of Noohi groups
  \[\coprod_{1\leq i\leq
n}^{\pi_1^\pet(Z,x)}{\bf VK}\bigl(\pi_1^\et(\tilde{X}_i,x_{i1}),\pi_1^\pet(Z,x);\pi_1^\pet(Z_{i1},x_{i1}),\ldots,\pi_1^\pet(Z_{in_i},x_{in_i})\bigr)
\]
  Moreover, the canonical maps from \(\pi_1^\pet(Z,x)\) and
  \(\pi_1^\et(\tilde X_i,x_{i1})\) to \(\pi_1^\pet(X,x)\) correspond,
  via this isomorphism, to the natural inclusions into the coproduct.
\end{thm}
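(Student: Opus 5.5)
The plan is to translate the categorical equivalence of Theorem~\ref{thm:structure-connected-covers} into group theory in two stages: first each factor, then the fibred product over \(\Cov(Z)\).

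\textbf{Step 1: each factor.} Fix \(i\). The factor
\[
\mathscr{C}_i:=\Cov(\tilde X_i)\mathop{\times}\limits_{\prod_{j=1}^{n_i}\Cov(Z_{ij})}\Cov(Z)
\]
is exactly of the shape treated in Proposition~\ref{van kampen machine}. We take \(\scrC=\Cov(\tilde X_i)\) with fibre functor \(F_{x_{i1}}\), \(\scrD=\Cov(Z)\) with fibre functor \(F_x\), and \(\scrE_j=\Cov(Z_{ij})\) with fibre functor \(F_{x_{ij}}\). The functors \(u_j,v_j\) are the pullbacks along \(Z_{ij}\to\tilde X_i\) and \(Z_{ij}\to Z\). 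All of these are tame infinite Galois categories by \cite[Lemma 7.4.1]{BS15}, since each scheme is connected and locally topologically Noetherian. We also use that \(\pi_1^\pet(\tilde X_i)=\pi_1^\et(\tilde X_i)\) because \(\tilde X_i\) is normal.

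The chosen paths supply the isomorphisms of fibre functors \(H_j\circ u_j\cong F\) and \(H_j\circ v_j\cong G\) required in the Setting. For \(\scrC\) these are the paths in \(\tilde X_i\) from \(x_{ij}\) to \(x_{i1}\); for \(\scrD\) they are the identifications \(F_{f(x_{ij})}\cong F_x\). With these choices the homomorphisms of the Setting are exactly \(\psi_{ij}\) and \(\phi_{ij}\). The Proposition then gives that \((\mathscr{C}_i,F')\) is tame infinite Galois with group
\[
{\bf VK}_i:={\bf VK}\bigl(\pi_1^\et(\tilde X_i,x_{i1}),\pi_1^\pet(Z,x);\pi_1^\pet(Z_{i1},x_{i1}),\ldots\bigr).
\]
The fibre functor \(F'\) here is \(F_{x_{i1}}\) on the first entry, which is isomorphic via \(\lambda_1\) to the fibre at \(x\). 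From the proof of the Proposition, the inclusions \(\pi_{\scrC}\to\Pi\) and \(\pi_{\scrD}\to\Pi\) correspond to the pullback functors \(\mathscr{C}_i\to\Cov(\tilde X_i)\) and \(\mathscr{C}_i\to\Cov(Z)\).

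\textbf{Step 2: gluing over \(\Cov(Z)\).} The iterated fibred product \(\prod_{\Cov(Z)}\mathscr{C}_i\) has fibre functor \(F_x\), using the \(\Cov(Z)\)-component. Its objects are a single \(\pi_1^\pet(Z,x)\)-set \(A\) carrying, for each \(i\), an extension of that action to a \({\bf VK}_i\)-action. This is precisely a set with actions of the groups \({\bf VK}_i\) that agree on \(\pi_1^\pet(Z,x)\).

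By the second description in Lemma~\ref{lem:noohi-coproduct}, iterated \(n-1\) times, such objects form the category of sets with a continuous action of the Noohi fibred coproduct \(\coprod^{\pi_1^\pet(Z,x)}_i{\bf VK}_i\). The forgetful functor is the fibre functor. Combining this with Theorem~\ref{thm:structure-connected-covers} and \cite[Theorem~7.2.5]{BS15} yields the isomorphism with \(\pi_1^\pet(X,x)=\pi_1(\Cov(X),F_x)\). The claim about the canonical maps follows because the maps into the coproduct are induced by the projection functors, which correspond to pullback along \(Z\to X\) and \(\tilde X_i\to X\).

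\textbf{Main obstacle.} I expect the delicate point to be the bookkeeping of fibre functors and base points. The fibre functor on \(\mathscr{C}_i\) must match \(F_x\) on the common \(\Cov(Z)\) factor, so that the \(n\) gluings happen along the same copy of \(\pi_1^\pet(Z,x)\). Moreover, the composite \(\Cov(X)\to\Cov(\tilde X_i)\to\Sets\) must agree with \(F_x\) only up to the chosen path, so the map from \(\pi_1^\et(\tilde X_i,x_{i1})\) is canonical only up to this identification. I would handle this by working throughout with \(\scrD=\Cov(Z)\) as the second factor, carrying \(F_x\). Each \(\lambda_1\) then transports \(F_{x_{i1}}\) to \(F_x\), which makes every \({\bf VK}_i\) receive the same \(\pi_1^\pet(Z,x)\) identically.
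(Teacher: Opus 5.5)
Your proposal is correct and follows the paper's route. The paper's proof only cites Theorem~\ref{thm:structure-connected-covers} together with Proposition~\ref{van kampen machine}; you spell this out by applying the Proposition to each factor $\Cov(\tilde X_i)\times_{\prod_j\Cov(Z_{ij})}\Cov(Z)$, transporting $F_{x_{i1}}$ to $F_x$ so that every ${\bf VK}_i$ receives the same copy of $\pi_1^\pet(Z,x)$, and then identifying the outer fibred product over $\Cov(Z)$ via the second description in Lemma~\ref{lem:noohi-coproduct}, which is exactly the bookkeeping the paper leaves implicit.
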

\begin{proof}
  This follows from Theorem~\ref{thm:structure-connected-covers} and
  Proposition~\ref{van kampen machine}.
\end{proof}

\subsection{Disconnected singularities: a d\'evissage}

Now drop the assumption that \(Z\) is connected.  Let
\(Z_1,\dots,Z_m\) be the connected components of \(Z\).  For each
\(j=1,\dots,m\) define the open subscheme
\[
  T_j:=\Bigl(\bigcup_{X_i\cap Z_j\neq\emptyset} X_i\Bigr)
  \setminus\Bigl(\bigcup_{t\neq j} Z_t\Bigr).
\]

\begin{lem}\label{lem:devissage}
  \begin{enumerate}
    \item Each \(T_j\) is connected and open in \(X\).
    \item \(X=T_1\cup\cdots\cup T_m\).
    \item \(Z_j\subset T_j\) and \(Z_j\cap T_t=\emptyset\) for \(t\neq j\).
    \item There exists an index \(k\) such that \(\bigcup_{t\neq k}T_t\) is connected.
  \end{enumerate}
\end{lem}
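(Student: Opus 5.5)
The plan is to work purely topologically. $X$ is Noetherian, so it has finitely many irreducible components $X_1,\dots,X_n$, and $Z$ has finitely many connected components $Z_1,\dots,Z_m$. Since $\Cov(X)\simeq\Cov(X_\red)$ and the statement only concerns underlying spaces, we may assume $X$ is reduced. If $m=0$ there is nothing to prove, so assume $Z\neq\emptyset$.

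Two basic observations will be used throughout.

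\emph{Observation (a).} A point lying on two distinct components $X_i,X_{i'}$ has a non-domain local ring. Such a point is therefore not regular, so $X_i\cap X_{i'}\subseteq Z$ for $i\neq i'$.

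\emph{Observation (b).} The generic point of each $X_i$ has a reduced zero-dimensional local ring, i.e.\ a field. Hence it does not lie in $Z$, and so $X_i\setminus\bigcup_{t\neq j}Z_t$ is a nonempty open subset of $X_i$. In particular it is irreducible, hence connected.

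For part (3), we have $Z_j\subseteq\bigcup_{X_i\cap Z_j\neq\emptyset}X_i$ and $Z_j\cap Z_t=\emptyset$ for $t\neq j$, so $Z_j\subseteq T_j$. Moreover $T_t$ removes $Z_j$ when $t\neq j$, so $Z_j\cap T_t=\emptyset$.

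For openness in (1), which I expect to be the only slightly delicate point (a union of components is closed, not open), I will show
\[
X\setminus T_j=\Bigl(\bigcup_{X_i\cap Z_j=\emptyset}X_i\Bigr)\cup\bigcup_{t\neq j}Z_t .
\]
The right-hand side is a finite union of closed sets, so this gives openness.

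The inclusion ``$\supseteq$'' of complements, i.e.\ $T_j\subseteq X\setminus(\text{RHS})$, is the nontrivial one. Suppose $x\in T_j$ also lies in some $X_{i'}$ with $X_{i'}\cap Z_j=\emptyset$. Write $x\in X_i$ with $X_i\cap Z_j\neq\emptyset$. By observation (a), $x\in X_i\cap X_{i'}\subseteq Z$, so $x\in Z_t$ for some $t$. We cannot have $t=j$, because $x\in X_{i'}$ and $X_{i'}\cap Z_j=\emptyset$. We cannot have $t\neq j$, because $x\in T_j$ excludes $\bigcup_{t\neq j}Z_t$. This is a contradiction. The reverse inclusion is immediate since every point lies on some component.

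For connectedness in (1), $T_j$ is the union of the sets $X_i\setminus\bigcup_{t\neq j}Z_t$ over those $i$ with $X_i\cap Z_j\neq\emptyset$. Each of these is connected by observation (b). Each meets the connected set $Z_j$, which lies in $T_j$. Hence the union is connected.

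For (2), first note that every $X_i$ meets $Z$. Otherwise, by observation (a), $X_i$ would be disjoint from all other components, hence open and closed. Connectedness of $X$ would then give $X=X_i$, which is normal, so $Z=\emptyset$, contrary to assumption. Now let $x\in X_i$ and pick $Z_j$ meeting $X_i$. If $x\notin\bigcup_{t\neq j}Z_t$, then $x\in T_j$. Otherwise $x\in Z_t$ for some $t\neq j$; then $X_i$ meets $Z_t$, so $x\in T_t$, since $x$ lies in no other $Z_{t'}$.

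For (4), form the finite graph $\Gamma$ on the vertex set $\{1,\dots,m\}$, with an edge between $j$ and $t$ whenever $T_j\cap T_t\neq\emptyset$. The graph $\Gamma$ is connected: for each connected component $C$ of $\Gamma$, the sets $\bigcup_{j\in C}T_j$ are open by (1), pairwise disjoint, and cover $X$ by (2), and $X$ is connected. Take a spanning tree of $\Gamma$ and let $k$ be a leaf of it (if $m=1$ the statement is vacuous). Then $\Gamma\setminus\{k\}$ is still connected. Consequently $\bigcup_{t\neq k}T_t$ is a union of connected sets whose intersection graph is connected, and is therefore connected.
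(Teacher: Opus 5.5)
Your proposal is correct and follows essentially the same route as the paper. The paper likewise gets openness from $X_i\cap X_s\subseteq\bigcup_{t\neq j}Z_t$, and connectedness by chaining the sets $X_i\setminus\bigcup_{t\neq j}Z_t$ through $Z_j$. For (iv), your spanning-tree leaf is a repackaging of the paper's ordering $j_1,\dots,j_m$ with connected initial unions, where one takes $k=j_m$.
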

\begin{proof}  (i) Assume \(X_i \cap Z_j \neq\emptyset\) and \(X_s \cap
  Z_j=\emptyset\).  Then \(X_i \cap X_s\), being a subset of the
  singular locus, lies in \(\bigcup_{t\neq j} Z_t\).  Therefore,
  \(X_s\) is disjoint from \(T_j\).  Thus 
  \begin{center}
  \resizebox{13cm}{!}{\parbox{\linewidth}{      \begin{align*}
          T_j=&\left[\left(\bigcup_ {X_i\cap Z_j\ne
          \emptyset}X_i\right)\setminus \left(\bigcup_{X_s\cap Z_j=\emptyset}
    X_s\right)\right]\setminus\left(\bigcup_{t\neq
                      j}Z_t\right)\\=&\left[X\setminus \left(\bigcup_{X_t\cap
                      Z_j=\emptyset}X_t\right)\right]\setminus\left(\bigcup_{t\neq
    j}Z_t\right)
\end{align*}
}}\end{center}
is open in \(X\).  Since \(T_j\) is the union of the connected sets
\(Z_j\) and \(X_i\setminus
\bigl(\bigcup_{t\neq j} Z_t\bigr)\) (over those \(i\) such that
\(X_i\cap Z_j\neq\emptyset)\), and each of the latter intersects with
\(Z_j\), it is clear that \(T_j\) is connected.

(ii) Let \(x \in X\).  Assume \(x \in Z\), say \(x \in Z_j\).  Then it is
clear that \(x \in T_j\).  Assume \(x \notin Z\), say \(x \in X_i\).  Take
any \(j\) such that \(X_i\cap Z_j \neq \emptyset\) (such \(j\) exists;
otherwise \(X_i\) is a regular component and we have \(n=1\) and
\(m=0\)), then it is clear \(x \in T_j\).

(iii) is obvious.

(iv) We claim that we can find a permutation \((j_1,\ldots,j_m)\) of
\(\{1,\ldots,m\}\) such that \(T_{j_1} \cup \cdots\cup T_{j_r}\) is
connected for all \(1\leq r\leq m\).  Then the desired result follows
by taking \(k=j_m\).   To prove the claim, we pick \(j_1\)
arbitrarily.  Suppose that we have picked \(j_1,\ldots,j_r\) with \(r<
m\), then it suffices to take \(j_{r+1}\) such that \((T_{j_1}\cup
\cdots\cup T_{j_r}) \cap T_{j_{r+1}}\neq\emptyset\).  Indeed if such
\(j_{r+1}\) could not be found, we would deduce that
\(T_{j_1}\cup\cdots\cup T_{j_r}\) is open and closed in the connected
scheme \(X\) by (ii).  This proves the claim.
\end{proof}

Using this lemma we can reduce the computation of \(\pi_1^\pet(X)\) to
the case where the singular locus has fewer connected components.

\begin{thm}\label{thm:disconnected-covers}
  Assume that \(T_1':=T_2\cup\cdots\cup T_m\) is connected, and let
  \(D_1,\dots,D_s\) be the connected components of the regular scheme
  \(T_1\cap T_1'\).  Then the pullback functor
  \[
    \Cov(X)\longrightarrow
    \Cov(T_1)\prod_{\prod_{t=1}^s\Cov(D_t)}\Cov(T_1')
  \]
  is an equivalence.
\end{thm}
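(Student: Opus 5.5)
The plan is to apply Lemma~\ref{general van kampen} in the open situation of Example~\ref{van kampen}, and then rewrite the resulting fibre product over \(\Cov(T_1\cap T_1')\) as a fibre product over \(\prod_t\Cov(D_t)\).

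\textbf{Step 1: the open cover.} By Lemma~\ref{lem:devissage}(i),(ii), \(T_1\) and \(T_1'=T_2\cup\cdots\cup T_m\) are open subschemes of \(X\) with \(X=T_1\cup T_1'\). Their inclusions are monomorphisms. By Example~\ref{van kampen}, \(T_1\coprod T_1'\to X\) is of effective descent for \(\Cov(-)\). Lemma~\ref{general van kampen} therefore gives an equivalence, induced by pullback,
\[
\Cov(X)\xrightarrow{\simeq}\Cov(T_1)\times_{\Cov(T_1\cap T_1')}\Cov(T_1'),
\]
since \(T_1\times_X T_1'=T_1\cap T_1'\) for open immersions.

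\textbf{Step 2: decomposing the intersection.} First I justify the parenthetical claim that \(T_1\cap T_1'\) is regular. By Lemma~\ref{lem:devissage}(iii), \(T_1\) is disjoint from \(Z_2,\dots,Z_m\). Each \(T_t\) with \(t\ge2\) is disjoint from \(Z_1\). Hence \(T_1\cap T_1'\) misses all of \(Z\), so its reduction is regular; we are free to pass to \(X_\red\) by \S\ref{pass to reduced structure}.

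Next, \(T_1\cap T_1'\) is an open subscheme of the Noetherian scheme \(X\), so it is Noetherian and has finitely many connected components \(D_1,\dots,D_s\). These are open and closed, so \(T_1\cap T_1'=\coprod_t D_t\). Geometric covers are sheaves on the pro-étale site of the base, hence Zariski-local and compatible with disjoint unions. This gives \(\Cov(\coprod_t D_t)\simeq\prod_t\Cov(D_t)\), via the restriction functors. (Alternatively, this is Lemma~\ref{general van kampen} applied iteratively to the disjoint open cover, with empty pairwise intersections.)

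\textbf{Step 3: compatibility.} Substituting Step 2 into Step 1 gives the desired equivalence. The remaining check is that the composite functor is the one in the statement: pull back to \(T_1\) and to \(T_1'\), then restrict each to every \(D_t\). Both functors \(\Cov(T_1)\to\prod_t\Cov(D_t)\) and \(\Cov(T_1')\to\prod_t\Cov(D_t)\) factor through \(\Cov(T_1\cap T_1')\) by pullback functoriality, so the 2-fibre products agree up to canonical equivalence.

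I expect no serious obstacle. The only point requiring care is bookkeeping in the 2-categorical fibre product: replacing the base category by an equivalent one, \(\Cov(T_1\cap T_1')\simeq\prod_t\Cov(D_t)\), yields an equivalent 2-fibre product. This is standard because 2-fibre products are invariant under equivalence of the diagram. The hypothesis that \(T_1'\) is connected is not needed for the equivalence itself. It matters only later, when fundamental groups are taken and Proposition~\ref{van kampen machine} is applied.
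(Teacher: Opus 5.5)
Your proof is correct and follows the paper's argument. Like the paper, you apply Lemma~\ref{general van kampen} to the open cover $X=T_1\cup T_1'$; the paper's proof cites Example~\ref{closed van kampen}, but the open version, Example~\ref{van kampen}, which you use, is the relevant one. You then replace $\Cov(T_1\cap T_1')$ by $\prod_t\Cov(D_t)$, as the paper does. The paper also identifies the components explicitly as $T_1\cap T_1'=\sqcup_{i\in S}(X_i\setminus Z)$, but your abstract decomposition into connected components suffices for the equivalence.
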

\begin{proof}
  Apply Lemma~\ref{closed van kampen} to the open cover
  \(X=T_1\cup T_1'\).  The intersection \(T_1\cap T_1'=(T_1\setminus Z)\cap
  (T_1'\setminus Z)\) is the disjoint
  union $\sqcup_{i\in S} (X_i\setminus Z)$, where
  \[S\coloneqq\set{i|X_i\cap Z_j\neq\emptyset, \,\text{ for }j=1\text{
  and some $j\geq 2$ }}\] hence its geometric covers are completely
  determined by their restrictions to the connected components.
\end{proof}

Choosing base points \(d_t\in D_t\) and identifying the fibre functors,
we obtain the following van Kampen formula.

\begin{thm}\label{thm:disconnected-structure}
  There is an isomorphism
  \[
    \pi_1^\pet(X,d_1)\simeq
    {\bf VK}\bigl(
      \pi_1^\pet(T_1,d_1),\,
      \pi_1^\pet(T_1',d_1);\,
      \pi_1^\et(D_1,d_1),\dots,\pi_1^\et(D_s,d_s)
    \bigr).
  \]
\end{thm}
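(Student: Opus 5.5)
The isomorphism will follow by feeding the equivalence of Theorem~\ref{thm:disconnected-covers} into Proposition~\ref{van kampen machine}. I will take
\[
\scrC=\Cov(T_1),\quad \scrD=\Cov(T_1'),\quad \scrE_t=\Cov(D_t)\ (t=1,\dots,s),
\]
with $u_t,v_t$ the pullback functors along the open immersions $D_t\hookrightarrow T_1$ and $D_t\hookrightarrow T_1'$. The fibre functors will be $F=F_{d_1}$ on $\Cov(T_1)$, $G=F_{d_1}$ on $\Cov(T_1')$ (note $d_1\in D_1\subseteq T_1\cap T_1'$), and $H_t=F_{d_t}$ on $\Cov(D_t)$.

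\textbf{Verifying the Setting.} Each $T_1$, $T_1'$, $D_t$ is connected and locally topologically Noetherian: $T_1$ by Lemma~\ref{lem:devissage}(i), $T_1'$ by hypothesis, and $D_t$ by definition. So by \cite[Lemma~7.4.1]{BS15} the categories of geometric covers with a geometric-point fibre functor are tame infinite Galois categories. The composites $H_t\circ u_t$ and $H_t\circ v_t$ are simply the fibre functors $F_{d_t}$ on $\Cov(T_1)$ and $\Cov(T_1')$, so they are again fibre functors of tame infinite Galois categories. Since $T_1$ and $T_1'$ are connected, fibre functors at different geometric points are isomorphic \cite[Lemma~7.4.1]{BS15}. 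I will choose isomorphisms $F_{d_t}\cong F_{d_1}$ (``paths'') on $\Cov(T_1)$ and on $\Cov(T_1')$; these define $\psi_t$ and $\phi_t$. For $t=1$ the paths are taken to be the identity.

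\textbf{Identifying the groups.} The fundamental groups are $\pi_1^\pet(T_1,d_1)$, $\pi_1^\pet(T_1',d_1)$ and $\pi_1^\pet(D_t,d_t)$. The $D_t$ are connected open subschemes of the regular locus $X\setminus Z$ (as noted in the proof of Theorem~\ref{thm:disconnected-covers}), hence normal. For normal schemes every geometric cover is a disjoint union of finite étale covers \cite[Lemma~7.4.10]{BS15}, so $\pi_1^\pet(D_t,d_t)=\pi_1^\et(D_t,d_t)$. This accounts for the étale groups in the statement.

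\textbf{Conclusion and main obstacle.} Theorem~\ref{thm:disconnected-covers} identifies $\Cov(X)$ with $\scrC\times_{\prod_t\scrE_t}\scrD$, and the fibre functor $F_{d_1}$ on $\Cov(X)$ corresponds to $F'(A,B,\lambda)=F(A)$. Proposition~\ref{van kampen machine} then yields $\pi_1^\pet(X,d_1)\simeq{\bf VK}(\dots)$.

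The main obstacle is compatibility in this last step. One must check that the equivalence of Theorem~\ref{thm:disconnected-covers}, which comes from descent for the open cover via Lemma~\ref{general van kampen} and Example~\ref{van kampen}, really lands in the 2-fibre product $\scrC\times_{\prod_t\scrE_t}\scrD$. In particular, the category $\Cov(T_1\cap T_1')$ must be identified with $\prod_t\Cov(D_t)$. This holds because $T_1\cap T_1'=\bigsqcup_t D_t$ is a disjoint union of open pieces, and geometric covers are sheaves, so a cover of the disjoint union is exactly a tuple of covers of the pieces. One must also check that $F_{d_1}$ on $\Cov(X)$ corresponds to $F'$ up to natural isomorphism, which holds since $d_1\in T_1$ and pullback preserves fibres. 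The choice of paths only changes the ${\bf VK}$-group up to isomorphism (conjugation by the $v_t$), so the statement is insensitive to it.
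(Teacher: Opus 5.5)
Your proposal is correct and follows the paper's argument, whose proof is just the one-line combination of Theorem~\ref{thm:disconnected-covers} with Proposition~\ref{van kampen machine}. The points you spell out are exactly the details the paper leaves implicit: the Setting's hypotheses hold by choosing paths in the connected schemes $T_1$ and $T_1'$, $\Cov(T_1\cap T_1')\simeq\prod_t\Cov(D_t)$, and $\pi_1^\pet(D_t,d_t)=\pi_1^\et(D_t,d_t)$ because $D_t$ lies in the regular locus.
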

\begin{proof}
  Immediate from Theorem~\ref{thm:disconnected-covers} and
  Proposition~\ref{van kampen machine}.
\end{proof}

\begin{rmk}
  The singular locus of \(T_1\) is \(Z_1\), and that of \(T_1'\) is
  \(Z_2\cup\cdots\cup Z_m\).  Hence Theorem~\ref{thm:disconnected-structure}
  provides an inductive procedure to compute \(\pi_1^\pet(X)\) in
  terms of schemes with connected singular locus, which are already
  handled by Theorem~\ref{thm:main-structure}.  By further induction
  on the dimension of the singular locus one can express
  \(\pi_1^\pet(X)\) entirely in terms of \'etale fundamental groups of
  normal schemes and discrete free groups.
\end{rmk}

\begin{cor} Let \(\scrC\) be the smallest class of Noohi groups such
  that
  \begin{itemize}
  \item \(\scrC\) contains the \'etale fundamental groups of connected
    normal
    schemes.
  \item \(\scrC\) contains discrete free groups of finite rank.
  \item \(\scrC\) is closed under fiber coproducts and quotients.
  \end{itemize}
 Then the pro-\'etale fundamental group of a Nagata J-2 connected scheme lies in \(\scrC\).
\end{cor}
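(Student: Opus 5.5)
The plan is a double induction: the outer induction runs on $\dim Z$, where $Z$ is the singular locus of $X_\red$, and the inner induction runs on the number $m$ of connected components of $Z$. Since $\Cov(X)\simeq\Cov(X_\red)$, we may assume $X$ is reduced.

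\emph{Base case.} If $Z=\emptyset$, then $X$ is regular, hence normal. It is connected, so it is irreducible, and $\pi_1^\pet(X)=\pi_1^\et(X)$ lies in $\scrC$ by the first condition.

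\emph{Reduction to connected singular locus.} Suppose $m\geq 2$. Use Lemma~\ref{lem:devissage}(iv) to relabel so that $T_1'=T_2\cup\cdots\cup T_m$ is connected. Theorem~\ref{thm:disconnected-structure} then writes $\pi_1^\pet(X)$ as ${\bf VK}(\pi_1^\pet(T_1),\pi_1^\pet(T_1');\pi_1^\et(D_1),\dots,\pi_1^\et(D_s))$. Unwinding Lemma~\ref{vk-cons-2}(iii), this ${\bf VK}$ group is obtained from a fibre coproduct of $\pi_1^\pet(T_1)$ and $\pi_1^\pet(T_1')$ over $\pi_1^\et(D_1)$, followed by a Noohi coproduct with the free group $F$ of rank $s-1$, followed by a Noohi quotient. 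So $\scrC$ is closed under ${\bf VK}$, provided we read ``closed under fiber coproducts and quotients'' to include plain coproducts, i.e.\ fibre coproducts over the trivial group. The groups $\pi_1^\et(D_t)$ belong to $\scrC$ because each $D_t$ is regular and connected, hence normal. Finally:
\begin{itemize}
\item $T_1$ and $T_1'$ are open in $X$, so they are again Nagata and J-2.
\item They are connected by Lemma~\ref{lem:devissage}.
\item Their singular loci are $Z_1$ and $Z_2\cup\dots\cup Z_m$ respectively, of dimension $\le\dim Z$ and with fewer components.
\end{itemize}
The inner induction therefore applies to both.

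\emph{Connected singular locus.} When $m=1$, Theorem~\ref{thm:main-structure} expresses $\pi_1^\pet(X)$ as an iterated fibre coproduct over $\pi_1^\pet(Z)$ of groups ${\bf VK}(\pi_1^\et(\tilde X_i),\pi_1^\pet(Z);\pi_1^\pet(Z_{i1}),\dots)$. Each $\tilde X_i$ is connected and normal, so $\pi_1^\et(\tilde X_i)\in\scrC$. The schemes $Z$ and $Z_{ij}$ are connected closed subschemes of Nagata J-2 schemes; $Z_{ij}$ lies in the finite $X$-scheme $\tilde X$. We would like to apply the outer induction hypothesis to them, which needs two things.

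First, they must be Nagata and J-2. Nagata passes to closed subschemes and to finite-type schemes over a Nagata scheme, and Nagata rings are J-2, so this holds.

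Second, and this is \emph{the main obstacle}, their singular loci must have dimension strictly less than $\dim Z$. This is true for $Z$ with its reduced structure: the regular locus of a reduced Noetherian Nagata (hence J-2) scheme is open and dense, because it contains all generic points, so the singular locus of $Z$ has smaller dimension. For $\tilde Z$ (equivalently $Z_{ij}$), the relevant object is its reduction, which has the same $\pi_1^\pet$. We have $\dim\tilde Z_{\red}\le\dim Z$ since $\tilde X\to X$ is finite, and the same density argument applies. In both cases the dimension of the singular locus strictly drops, so the outer induction gives $\pi_1^\pet(Z),\pi_1^\pet(Z_{ij})\in\scrC$.

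The ${\bf VK}$ groups and the fibre coproduct over $\pi_1^\pet(Z)$ are then in $\scrC$ by the closure properties, as in the reduction step. Since the dimension of $Z$ is finite for Noetherian $X$, the induction terminates. The only steps needing care are the dimension bookkeeping and the finiteness of $\dim Z$ needed to start the induction.
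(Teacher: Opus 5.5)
Your route is the one the paper has in mind: the Remark before the corollary sketches exactly this. There is an inner induction on the number $m$ of connected components of $Z$ via Lemma~\ref{lem:devissage} and Theorem~\ref{thm:disconnected-structure}, and an outer induction on the singular locus via Theorem~\ref{thm:main-structure}. Your checks on $T_1$, $T_1'$ and the $D_t$, and your unwinding of ${\bf VK}$ through Lemma~\ref{vk-cons-2}(iii), are fine. The trivial group is already in $\scrC$ (free of rank $0$), so plain coproducts need no special reading. However, two of your supporting claims are false as stated. Both are repairable.

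\emph{Nagata does not imply J-2.} There are universally Japanese rings whose regular locus is not even open, which is why the paper assumes both conditions. What you actually need is that J-2, in the sense of the definition the paper cites \cite[Tag 07R2]{stacks-project}, is inherited by $Z$ and $\tilde Z$. That definition says every scheme locally of finite type over $X$ has open regular locus, and $Z$, $\tilde Z$ are finite over $X$. The weaker gloss ``singular locus closed'' would not pass to closed subschemes or finite covers, so your induction genuinely needs the Stacks notion.

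\emph{Noetherian does not imply finite-dimensional.} So ``induction on $\dim Z$'' does not cover all cases. Nagata's example gives a regular Noetherian domain $A$ of infinite Krull dimension, and in characteristic $0$ it is excellent. Then $\operatorname{Spec} A[y,z]/(yz)$ is a connected Nagata J-2 scheme whose singular locus $V(y,z)\cong\operatorname{Spec}A$ is infinite-dimensional. The paper's Remark also speaks of induction on dimension, so this oversight is shared.

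A repair is Noetherian induction on the closed image in $X$ of the singular locus, ordered lexicographically with $m$.
\begin{itemize}
\item The d\'evissage step does not enlarge this image and lowers $m$.
\item In Theorem~\ref{thm:main-structure}, the images of $\mathrm{Sing}(Z)$ and of $\mathrm{Sing}((Z_{ij})_\red)$ are closed subsets of $Z$ that miss every generic point of $Z$. The reason is that the fibres of the finite map $\tilde Z\to Z$ are discrete, so only generic points of $\tilde Z$ lie over generic points of $Z$, and these are regular points of $\tilde Z_\red$.
\item The same argument works at deeper levels of the recursion, because every singular locus produced there is closed in a scheme finite over $X$.
\end{itemize}
The descending chain condition on closed subsets of $X$ then makes the induction terminate.
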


\subsection{Example: zero‑dimensional singularities}

Assume that the singular locus \(Z\) is a disjoint union of spectra of
separably closed fields.  Then every connected component
of \(Z\) and of \(\tilde Z\) has trivial pro‑\'etale fundamental
group.  Let \(m\) be the number of connected components of \(Z\) and
\(\tilde m\) that of \(\tilde Z\).

\begin{thm}\label{thm:curve-case}
  Under the above hypothesis,
  \[
    \pi_1^\pet(X,x)\;\simeq\;
    \Bigl(\coprod_{i=1}^n \pi_1^\et(\tilde X_i,x_i)\Bigr)
    \;\coprod\; \mathbb{F}_{\tilde m - m - n + 1},
  \]
  where \(\mathbb{F}_r\) denotes the free discrete group of rank \(r\)
  and the coproduct is taken in the category of Noohi groups.
\end{thm}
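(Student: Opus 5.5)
The plan is to bypass the dévissage of Theorem~\ref{thm:disconnected-structure} and apply Lemma~\ref{closed van kampen v2} directly to the finite surjection $f\colon\tilde X\to X$ and the whole singular locus $Z$. The reason is that the open sets $T_j$ there remove points from the components. For curves, $\pi_1^\et(\tilde X_i\setminus\{\text{points}\})$ differs from $\pi_1^\et(\tilde X_i)$, so the inductive route would require an extra argument identifying the images of the groups $\pi_1^\et(D_t)$.

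The first step is to check the hypothesis of Lemma~\ref{closed van kampen v2}. Since $f$ is an isomorphism over $X\setminus Z$ (the regular locus is normal), $\tilde X\times_X\tilde X$ is set-theoretically the union of $\Delta_f(\tilde X)$ and $\tilde Z\times_Z\tilde Z$. Hence
\[
\Cov(X)\simeq \Bigl(\prod_{i=1}^n\Cov(\tilde X_i)\Bigr)\times_{\Cov(\tilde Z)}\Cov(Z).
\]
Here $Z$ (resp.\ $\tilde Z$) is a finite disjoint union of $m$ (resp.\ $\tilde m$) spectra of separably closed fields; $\tilde Z$ is a finite reduced scheme over $Z$, hence a disjoint union of spectra of purely inseparable, thus separably closed, extensions. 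So $\Cov$ of each point is just $\Sets$.

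In concrete terms, an object of $\Cov(X)$ consists of:
\begin{itemize}
\item a geometric cover $Y_i$ of each $\tilde X_i$;
\item a set $S_z$ for each point $z\in Z$;
\item a bijection $F_{\tilde z}(Y_{i})\cong S_{f(\tilde z)}$ for each point $\tilde z\in\tilde Z$ lying on $\tilde X_i$.
\end{itemize}
This is exactly the category of "representations of a graph of groups" $\Gamma$. The graph $\Gamma$ is bipartite: its vertices are the $n$ components $\tilde X_i$, carrying the groups $\pi_1^\et(\tilde X_i)$, and the $m$ points of $Z$, carrying trivial groups. Its edges are the $\tilde m$ points of $\tilde Z$, carrying trivial groups, and each edge joins the vertex $\tilde X_i$ containing it to its image vertex $f(\tilde z)$. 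The graph $\Gamma$ is connected because $X$ is connected: a disconnection of $\Gamma$ would give a disconnection of $X$ into two closed unions of components meeting in no singular point. Its first Betti number is $\tilde m-(n+m)+1$.

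The remaining step, which I expect to be the main (though essentially combinatorial) obstacle, is to compute the fundamental group of this category as
\[
\Bigl(\coprod_i\pi_1^\et(\tilde X_i)\Bigr)\coprod\mathbb F_{\tilde m-m-n+1}.
\]
I would choose a spanning tree $\mathcal T$ of $\Gamma$ and glue vertices one tree edge at a time. Gluing along a tree edge is an instance of Proposition~\ref{van kampen machine} with $s=1$ and trivial edge group, which yields a Noohi coproduct; trivial vertex groups contribute nothing. Each of the remaining $\tilde m-(n+m-1)$ edges is a further identification of two fibre functors already lying in one connected piece. Through Proposition~\ref{van kampen machine} (with one more index $s$) and Lemma~\ref{vk-cons-2}(iii), such an edge adds a free generator $v$, subject only to the relation $\psi(a)=v^{-1}\phi(a)v$ for $a$ in the trivial edge group, which is vacuous.

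To make this rigorous I would phrase the gluing as an induction. At each stage the category is a 2-fibre product of the previously glued category with one $\Cov(\tilde X_i)$ or one $\Cov(z)$ over copies of $\Sets$. Base-point paths are fixed along $\mathcal T$ to identify all fibre functors, which is what makes the $\psi_j,\phi_j$ well defined. Collecting the factors gives the stated isomorphism, with the free rank equal to the number of non-tree edges, namely $\tilde m-m-n+1$.
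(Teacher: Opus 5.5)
Your proposal is correct, and it takes a genuinely different route from the paper.

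\textbf{The paper's route.} The paper argues by induction on $m$. The base case $m=1$ is read off from Theorem~\ref{thm:main-structure}. The induction step uses the open cover $X=T_1\cup T_1'$ of Lemma~\ref{lem:devissage} together with Theorem~\ref{thm:disconnected-structure}, and counts free ranks via $\#S_1+\#S_2=n+d$.

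\textbf{Your route.} You observe that Lemma~\ref{closed van kampen v2} never requires $Z$ to be connected, so you apply it once to the whole singular locus. You then compute the fundamental group of the resulting bipartite ``graph of categories'' by attaching vertices along a spanning tree. Each attachment is one application of Proposition~\ref{van kampen machine} with trivial edge groups. Attaching along $s$ edges contributes a free factor of rank $s-1$, so the total free rank is the first Betti number $\tilde m-m-n+1$.

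\textbf{What each approach buys.} Your route gains exactly what you anticipated. The irreducible components of $T_1$ and $T_1'$ have normalizations equal to $\tilde X_i$ minus finitely many points. So for $i\in S$, the induction hypothesis really produces $\pi_1^\et$ of punctured normalizations, not $\pi_1^\et(\tilde X_i)$ as the paper's induction step writes. Making that step rigorous needs two extra ingredients, both of which your argument avoids:
\begin{itemize}
\item the open van Kampen for the cover $\tilde X_i=\bigl(\tilde X_i\setminus f^{-1}(Z_2\cup\cdots\cup Z_m)\bigr)\cup\bigl(\tilde X_i\setminus f^{-1}(Z_1)\bigr)$;
\item a change of the free generators $v_j$ to absorb base-point conjugations.
\end{itemize}
The appeal to surjectivity of $\pi_1^\et(D_j)\to\pi_1^\et(\tilde X_i)$ does not by itself supply these. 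Your route also makes the free rank transparent as the first Betti number of the incidence graph. The paper's route, in turn, reuses the structure theorems already proved for connected singular locus and fits the general inductive scheme behind Theorem~\ref{thmI}.

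\textbf{Two small corrections.}
\begin{enumerate}
\item $\tilde Z=\tilde X\times_X Z$ need not be reduced; for a cusp it is $\operatorname{Spec} k[t]/(t^2)$. This is harmless, because $\Cov$ is insensitive to nilpotents, so each of the $\tilde m$ connected components of $\tilde Z$ still has $\Cov\simeq\Sets$.
\item Non-tree edges should not be treated as self-gluings of an already connected piece. Proposition~\ref{van kampen machine} only handles 2-fibre products of two categories. Instead, include them as extra indices $j\ge 2$ when the vertex they touch is attached, ordering the vertices so that each new one meets the already-glued part. Your last paragraph already does this.
\end{enumerate}
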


\begin{proof}
  We argue by induction on \(m\), the number of connected components
  of the singular locus.
  Base case \(m=1\).
  Apply Theorem~\ref{thm:main-structure} (the structure theorem for
  connected singular locus).  Because every component of \(Z\) and of
  \(\tilde Z\) has trivial pro‑\'etale fundamental group, the
  VK-presentation collapses: for each irreducible
  component \(X_i\) with \(n_i\) preimages in \(\tilde Z\), the
  VK-group simplifies to
  \(\pi_1^\et(\tilde X_i) \coprod \mathbb{F}_{n_i-1}\).  Taking the
  Noohi coproduct over all \(i\) and using \(\sum_i n_i = \tilde m\)
  yields
  \begin{center}
  \resizebox{13cm}{!}{\parbox{\linewidth}{\[
    \pi_1^\pet(X) \;\simeq\;
    \Bigl(\coprod_{i=1}^n \pi_1^\et(\tilde X_i)\Bigr)
    \;\coprod\; \mathbb{F}_{\tilde m - n}.
\]}}\end{center}
  Since \(m=1\), \(\tilde m - n = \tilde m - m - n + 1\), which proves
  the formula.

  Induction step.
  Assume \(m>1\) and that the statement holds for all schemes whose
  singular locus has fewer than \(m\) connected components.
  By Lemma~\ref{lem:devissage}(iv) we may suppose, after renumbering,
  that \(T_1' = T_2 \cup \dots \cup T_m\) is connected.
  The singular locus of \(T_1\) is \(Z_1\) (one component) and that of
  \(T_1'\) is \(Z_2 \cup \dots \cup Z_m\) (\(m-1\) components).
  The intersection \(T_1 \cap T_1'\) is a disjoint union of regular
  schemes \(D_1,\dots,D_d\); each of them is an open subscheme of
  exactly one irreducible component \(X_i\) that meets both \(Z_1\) and
  some other \(Z_j\), hence \(d = \# S\) where
  \(S = S_1 \cap S_2\) with \(S_1\) the set of components meeting
  \(Z_1\) and \(S_2\) those meeting \(Z_2\cup\dots\cup Z_m\).

  By the induction hypothesis we have
  \begin{center}
\resizebox{13cm}{!}{\parbox{\linewidth}{
  \[
    \pi_1^\pet(T_1) \;\simeq\;
    \Bigl(\coprod_{i\in S_1} \pi_1^\et(\tilde X_i)\Bigr)
    \;\coprod\; \mathbb{F}_{\tilde m_1 - \# S_1},
  \]
  \[
    \pi_1^\pet(T_1') \;\simeq\;
    \Bigl(\coprod_{i\in S_2} \pi_1^\et(\tilde X_i)\Bigr)
    \;\coprod\; \mathbb{F}_{\tilde m_2 - (m-1) - \# S_2 + 1},
\]}}\end{center}
  where \(\tilde m_1\) (resp. \(\tilde m_2\)) is the number of
  connected components of \(\tilde Z\) lying over \(Z_1\) (resp. over
  \(Z_2\cup\dots\cup Z_m\)).  Clearly \(\tilde m_1+\tilde m_2=\tilde m\)
  and \(\# S_1 + \# S_2 = n + d\).

  Theorem~\ref{thm:disconnected-structure} presents
  \resizebox{1.4cm}{!}{\(\pi_1^\pet(X)\)}
  as the VK-group
  \resizebox{5.5cm}{!}{\(\mathbf{VK}(\pi_1^\pet(T_1),\pi_1^\pet(T_1');\pi_1^\et(D_1),\dots)\)}.
  Unfolding the definition, this is the Noohi coproduct of
  \(\pi_1^\pet(T_1)\), \(\pi_1^\pet(T_1')\) and a free group
  \(\mathbb{F}_{d-1}\), quotiented by the relations
  \(\psi_j(a) = v_j^{-1} \phi_j(a) v_j\) for \(j=1,\dots,d\), where
  \(\psi_j\) and \(\phi_j\) are induced by the inclusions
  \(D_j \hookrightarrow T_1\) and \(D_j \hookrightarrow T_1'\).

  For a component \(X_i\) with \(i\in S\), 
  \(D_j\) is a dense open of \(\tilde X_i\); therefore the induced map
  \(\pi_1^\et(D_j) \to \pi_1^\et(\tilde X_i)\) is surjective
  (cf.~\cite[\href{https://stacks.math.columbia.edu/tag/0BQM}{0BQM}]{stacks-project}).
  Consequently the two copies of \(\pi_1^\et(\tilde X_i)\) that appear
  in \(\pi_1^\pet(T_1)\) and \(\pi_1^\pet(T_1')\) are identified
  through the fibered coproduct over \(\pi_1^\et(D_j)\), and the free
  generator \(v_j\) does not introduce any additional relation inside
  those groups.  The remaining components (those in \(S_1 \setminus S\)
  and \(S_2 \setminus S\)) stay distinct.

  Hence the whole construction collapses to the free product of the
  distinct \(\pi_1^\et(\tilde X_i)\) for \(i=1,\dots,n\), together with
  the free summands coming from the induction hypothesis as well as the
  additional free group \(\mathbb{F}_{d-1}\).  Summing the ranks,
  \begin{center}
  \resizebox{12cm}{!}{\parbox{\textwidth}{\begin{align*}
    &\quad \bigl(\tilde m_1 - \# S_1\bigr)
      + \bigl(\tilde m_2 - (m-1) - \# S_2 + 1\bigr)
      + (d-1) \\
    &= \tilde m - m + 1 - (\# S_1 + \# S_2) + d \\
    &= \tilde m - m + 1 - (n + d) + d
      = \tilde m - m - n + 1.
  \end{align*}}}\end{center}
  This completes the induction.
\end{proof}

This generalises \cite[Theorem~1.17]{Elena18}, which treats the stable
case of a semi‑stable curve.

\def \pipet {\pi_1^\pet}
\def \piet {\pi_1^\et}
\def \bx {x}

\appendix{}
\section[Comparison with other van Kampen formulas]{Comparison with other van Kampen formulas\\ \normalfont\textit{by Marcin Lara}}

The goal of this appendix is to explain the relation of the formula presented
in this article to another general version of a van Kampen formula for $\pipet$
that appeared in \cite{Lara24} (and \cite{Lara19}).

This latter result follows the presentation of \cite{stix2006}, generalizing it from the profinite group $\piet$ to the Noohi group $\pipet$. In its most general form, \cite[Corollary 3.19]{Lara24}, it gives, for any morphism of effective descent $f \colon X' \to X$, a formula for $\pipet(X,\bx)$ in terms of fundamental groups of connected components of $X'$, a discrete fundamental group of a certain ``dual graph'', and relations coming from further fiber products. 

 Sometimes, this extra flexibility is useful, e.g. taking $f$ to be an
alteration (in the sense of de Jong) or an \'etale cover. But, as the groups
$\pipet(X,\bx)$ and $\piet(X,\bx)$ match for normal $X$, the most relevant case
for capturing the ``novel phenomena'' coming from working with $\pipet$ is when
$f \colon X' \to X$ is the normalization morphism. In this case, the formula
takes the more concrete form spelled out in \cite[Remark 3.21]{Lara24}. In
this form, the formulas (the one in \emph{loc.\ cit.\ }and the one described in
Theorem \ref{thmI} and \ref{thmII} of the present article) become closer to each other.

The main difference is the following: when the assumptions of Theorem \ref{thmII} are satisfied, \cite[Remark 3.21]{Lara24} often produces extra generators in the discrete part. Those are then killed by the relations (of course, as the resulting group is the same), but this still makes the computation somewhat cumbersome. For actually \emph{computing} the pro-\'etale fundamental group (in terms of some $\piet$'s and a discrete group), the formula in Theorem \ref{thmII} is much more straightforward.

This already manifests in one of the simplest examples of non-normal (and
non-geometri\-cally unibranch) schemes, namely the nodal curve, by which we mean
the projective line $\mathbb{P}^1_k$ over a field $k$ with two rational points
(say $0$ and $1$) glued. When $k = k^{\mathrm{sep}}$, the formula in Theorem
\ref{thmII} immediately gives the result (i.e.\ $\pipet(X,\bx) = \mathbb{Z}$),
while the other formula already requires some work (\cite[Example
3.24]{Lara24}). The ``magic'' behind the efficiency of this article's formula
sits in Lemma \ref{closed van kampen v2}, which was not present in any form in \cite{Lara24} or \cite{stix2006}.

Let us finish by mentioning that in some cases, the difference becomes less stark. When the normalizations of irreducible components inject into the scheme (e.g.\ the scheme obtained by gluing two smooth curves at one or two different rational points), one can use \cite[Proposition 3.22 + Observation 3.23]{Lara24} to efficiently compute $\pipet$. The nodal curve, however, does not have this property.

The more ``redundant'' formulation of the van Kampen formula works under more
permissive assumptions (Nagata suffices) and is still useful in ``abstract''
applications, where the precise calculation of $\pi_1$ is not strictly needed,
e.g.\ the fundamental exact sequence, K{\" u}nneth formula, or finite
presentation of fundamental groups; see \cite{Lara24} and \cite{LSS24}.

\section*{Acknowledgements}
%The first named author’s work was part of the project “Kapibara”
%supported by the European Research Council (ERC) under the European
%Union’s Horizon 2020 research and innovation programme (grant agreement No.~802787).  
The second named author  is supported
by Guangdong Basic and Applied Basic Research Foundation grant 
2025A1515012175.
\printbibliography

\end{document}